\theoremstyle{plain}
\newtheorem{theorem}{Theorem}[section]
\newtheorem{lemma}[theorem]{Lemma}
\newtheorem{proposition}[theorem]{Proposition}
\newtheorem{corollary}[theorem]{Corollary}
\newtheorem{example}[theorem]{Example}
\newtheorem{definition}[theorem]{Definition}
\newtheorem{question}{Question}
\newtheorem{remark}[theorem]{Remark}
\numberwithin{equation}{section}
\begin{document}

\title{Extension of Borel maps with values in non-metrizable spaces}

\author{Olena Karlova}

\author{Volodymyr Mykhaylyuk}

\maketitle

\begin{abstract}
  We investigate the possibility of extension of $F_\sigma$-measurable and Baire-one maps from subspaces of topological spaces when these maps take values in spaces which covers by a sequence of metrizable spaces with special properties.
\end{abstract}

\section{Introduction}

Let $X$ and $Y$ be topological spaces and $f:X\to Y$ be a mapping. We say that $f$ is
\begin{itemize}
\item  {\it of the first Baire class}, $f\in {\rm B}_1(X,Y)$, if $f$ is a pointwise limit of a sequence of continuous maps between  $X$ and $Y$;

   \item {\it $F_\sigma$-measurable}, $f\in {\rm H}_1(X,Y)$, if $f^{-1}(V)$ is an $F_\sigma$-set in $X$ for any open subset $V$ of  $Y$;

   \item {\it functionally $F_\sigma$-measurable}, $f\in {\rm K}_1(X,Y)$, if $f^{-1}(V)$ is a functionally  $F_\sigma$-set in $X$ for any open subset $V$ of  $Y$;

   \item {\it barely continuous}, if for any non-empty closed set $F\subseteq X$ the restriction $f|_F$ has a point of continuity;

   \item {\it of the $\alpha$'th Baire class} for some $\alpha\in[2,\omega_1)$, $f\in{\rm B}_\alpha(X,Y)$, if $f$ is a pointwise limit of a sequence of maps from lower Baire classes;
\end{itemize}

We use the term ''barely continuous'' due to Stephens~\cite{steph}. Barely continuous functions are also mentioned in literature as functions with the ''point of continuity property'' \cite{Kum, Spurny}.

Let $P$ be a property of maps and let  $P(X)$ be the collection of all real-valued functions with the property $P$ defined on a topological spaces   $X$. A subspace  $E$ of a topological space  $X$ is called  {\it $P$-embedded} in $X$, if any function  $f\in  P(E)$ can be extended to a function   $g\in P(X)$.

 The extension problem if a property $P$ is weaker than continuity (i.e., if $P={\rm B}_\alpha$ or $P={\rm H}_\alpha$) was solving by many mathematicians and were started in classical works of Hausdorff~\cite{Hausdorff:1957}, Sierpi\'{n}ski~\cite{Sierp:ext}, Alexits~\cite{Alex}, Hahn~\cite{Hahn:ext}, Kuratowski~\cite{Ku2}. Further, these investigation were continued in works of Hansell~\cite{Hansell:1974}, Spurn\'{y}~\cite{Spurny:ext1, Spurny:ext}, Kalenda and Spurn\'{y}~\cite{KaSp}, Karlova~\cite{Karlova:BAMS, Karlova:CMUC:2013}. Let us observe that all of these authors deal with maps taking values in metrizable spaces, while the extension problem for functions with values in non-metrizable spaces was not studied at all. Therefore, we start the investigations in this direction with this paper. 
 
 In Section~\ref{sec:11} we introduce a class of topological spaces which admit sequentially absorbing coverings. This class includes strict inductive limits of sequences of locally convex metrizable spaces and small box products of sequences of metrizable spaces. We study separation axioms and metrizability of these spaces. Section~\ref{sec:22} contains technical auxiliary results about $F_\sigma$-measurable and Baire-one maps with values in  spaces which admit sequentially absorbing coverings. Further, we introduce $\sigma$-regular maps in Section~\ref{sec:33} and investigate their relations with $F_\sigma$-measurable, Baire-one and barely continuous maps. Finally, Section~\ref{sec:44} contains Extension Theorem for Baire-one and $F_\sigma$-measurable maps with values in strongly $\sigma$-metrizable spaces with additional properties.  We finish the paper with an open question.

\section{Some properties of spaces which admit sequentially absorbing coverings}\label{sec:11}

\begin{definition}
{\rm A countable covering $(X_n:n\in\omega)$ of a topological space $X$ is said to be {\it sequentially absorbing} if for any convergent sequence $(x_n)_{n\in\omega}$ in $X$ there exists $k\in\omega$ such that $\{x_n:n\in\omega\}\subseteq X_k$.}
\end{definition}
 If we put $Y_n=\overline{X_0\cup\dots\cup X_n}$ for every $n\in\omega$, then we obtain an increasing covering $(Y_n:n\in\omega)$ of $X$ which is, evidently, sequentially absorbing. Therefore, from now on we assume that every sequentially absorbing covering is closed and increasing.

\begin{definition}
  {\rm A topological space $X$ is said to be {\it (strongly) $\sigma$-metrizable} if it admits a (sequentially  absorbing) covering by metrizable subspaces. }
\end{definition}

It is worth noting that the class of all strongly $\sigma$-metrizable spaces includes countable $T_1$-spaces in which all convergent sequences are trivial,  strict inductive limits of sequences of locally convex metrizable spaces and small box products of sequences of metrizable spaces.

A topological space $X$ is said to be {\it perfect} if every its closed subset is $G_\delta$.

\begin{proposition}\label{pro:properties_sm} Let $(X_n:n\in\omega)$ be a sequentially absorbing covering of a topological space $X$.
\begin{enumerate}
  \item\label{it:pro:properties_sm:1} If $Y$ is a topological space and $f:X\to Y$ be a map such that $f|_{X_n}$ is continuous for all $n\in\omega$, then $f$ is sequentially continuous on $X$.

  \item\label{it:pro:properties_sm:6} If $Y$ is a first countable topological space and $f:Y\to X$ is continuous, then for any $y\in Y$ there exist a neighborhood $U$ of $y$ in $Y$ and a number $n\in\omega$ such that $f(U)\subseteq X_n$.

\item\label{it:pro:properties_sm:3} If each $X_n$ is Hausdorff and $X$ is a first countable space, then $X$ is Hausdorff.

\item\label{it:pro:properties_sm:4} If each $X_n$ is regular and $X$ is a Fr\'{e}chet-Urysohn  space, then $X$ is regular.

  \item\label{it:pro:properties_sm:2} If each $X_n$ is  (perfectly) normal and $X$ is a sequential space, then $X$ is (perfectly) normal.

 \item\label{it:pro:properties_sm:5} If each $X_n$ is metrizable and $X$ is a first countable space, then $X$ is metrizable.
\end{enumerate}
\end{proposition}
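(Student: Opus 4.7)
The unifying observation is that, since each $X_k$ is closed, every convergent sequence in $X$ together with its limit is absorbed into some $X_k$. Item~(1) is then immediate: for $x_m\to x$ the set $\{x_m\}\cup\{x\}$ lies in some $X_k$, and continuity of $f|_{X_k}$ gives $f(x_m)\to f(x)$. Item~(2) is the contrapositive: if no such $U$ and $n$ existed at some $y\in Y$, a decreasing countable base $(U_n)$ at $y$ would yield $y_n\in U_n$ with $f(y_n)\notin X_n$; then $y_n\to y$, so $f(y_n)\to f(y)$ is a convergent sequence in $X$, yet $f(y_k)\notin X_k$ for every $k$, contradicting absorption. Item~(3) is the sequential Hausdorff argument: if $x\ne y$ could not be separated, pick $z_n$ in the intersection of the $n$-th terms of decreasing bases at $x$ and $y$; then $z_n\to x$ and $z_n\to y$, absorption places the whole sequence into some Hausdorff $X_k$ which also contains $x,y$ as limits of a sequence in the closed set $X_k$, and uniqueness of sequential limits in $X_k$ forces $x=y$.

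For item~(4), given $x$ and closed $F\not\ni x$, my plan is to construct inductively open sets $V_n\subseteq X_n$ with $x\in V_n$, $V_n\subseteq V_{n+1}$, and $\overline{V_n}^{X_n}\cap F=\varnothing$, using the regularity of $X_n$ at each stage, and set $V=\bigcup_n V_n$. Sequential openness of $V$ comes for free from absorption---any sequence in $V$ with limit in $V$ is absorbed into some $X_n$ where $V_n\subseteq V$ is already open in $X_n$---and since $X$ is sequential (being Fr\'{e}chet-Urysohn), $V$ is open. The closure formula $\overline{A}^X=\bigcup_k\overline{A\cap X_k}^{X_k}$, itself a consequence of the Fr\'{e}chet-Urysohn property combined with absorption, then reduces $\overline{V}^X\cap F=\varnothing$ to the level-wise conditions chosen during the induction. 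For item~(5), to separate disjoint closed $F_1,F_2\subseteq X$ I would build continuous $f_n\colon X_n\to[0,1]$ with $f_n|_{F_1\cap X_n}=0$, $f_n|_{F_2\cap X_n}=1$, and $f_{n+1}|_{X_n}=f_n$: at each step glue $f_n$ with $0$ on $F_1\cap X_{n+1}$ and $1$ on $F_2\cap X_{n+1}$ via the pasting lemma on closed subsets of $X_{n+1}$, then Tietze-extend using normality of $X_{n+1}$. Item~(1) combined with sequentialness of $X$ upgrades the common extension $f\colon X\to[0,1]$ to a genuinely continuous separating function; the perfectly-normal variant runs the same argument on a zero-set function built from perfect normality of each $X_n$.

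For item~(6), first countability gives Fr\'{e}chet-Urysohnness and sequentialness, so items~(3)--(5) show that $X$ is Hausdorff, regular, and perfectly normal. To conclude metrizability I would either assemble $\sigma$-locally finite bases of the $X_n$'s into a $\sigma$-locally finite base of $X$ and invoke the Nagata--Smirnov theorem, or inductively extend a compatible metric from $X_n$ to the metrizable $X_{n+1}$ (in which $X_n$ sits as a closed subspace) and verify via first countability that the resulting metric topology agrees with the original. The main obstacle, in my view, is the inductive step of item~(4): the desired enlargement of $\overline{V_n}^{X_n}$ to an open subset of $X_{n+1}$ whose $X_{n+1}$-closure still misses $F$ is really a closed-closed separation problem in $X_{n+1}$, not granted by regularity alone, so the Fr\'{e}chet-Urysohn hypothesis has to be exploited carefully---either by settling for sequentially open approximations at each level and upgrading globally, or by using the closure formula to replace the closed-closed separation by iterated point-closed ones. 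Item~(6) has a similar flavor: the countable collection of bases must be combined coherently across levels, and first countability is what rules out pathologies that would obstruct the gluing.
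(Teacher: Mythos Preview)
Your arguments for (1), (2), (3), (5) and the metric-extension approach to (6) are essentially the paper's: item~(3) in the paper is phrased via ``unique sequential limits $+$ first countability $\Rightarrow$ Hausdorff'' rather than by explicitly producing a double-convergent sequence, and for (5) the paper carries out the perfectly-normal case in detail (building $f_n:X_n\to[0,n+1]$ with $F\cap X_n=f_n^{-1}(0)$) and declares the normal case analogous, but these are the same ideas. For (6) the paper uses Hausdorff's metric-extension theorem exactly as in your second alternative, and the verification that the glued metric generates the topology reduces, via first countability, to showing each point lies in $\mathrm{int}\,X_n$ for some $n$---which is item~(2) applied to the identity.

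The genuine gap is in (4), and you have correctly located it: growing open sets $V_n\ni x$ with $\overline{V_n}^{X_n}\cap F=\varnothing$ forces, at the inductive step, a separation of the closed set $\overline{V_n}^{X_n}$ from the closed set $F\cap X_{n+1}$ inside $X_{n+1}$, which regularity does not provide. Your proposed workarounds (sequentially-open approximations, or the closure formula) do not dissolve this; the closure formula only tells you what you must achieve levelwise, not how to achieve it. The paper's fix is to \emph{swap the roles of the point and the closed set}: build increasing open sets $U_n\subseteq X_n$ around $F$ (so $F\cap X_n\subseteq U_n$) with $a\notin\overline{U_n}$ and $U_{n+1}\cap X_n=U_n$. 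Now the inductive step needs only to separate the single point $a$ from the closed set $(F\cap X_{n+1})\cup\overline{U_n}$ in $X_{n+1}$, which regularity handles; one then intersects with $V_{n+1}\cup(X_{n+1}\setminus X_n)$ (where $V_{n+1}$ is any open set in $X_{n+1}$ with $V_{n+1}\cap X_n=U_n$) to force the compatibility $U_{n+1}\cap X_n=U_n$. The Fr\'echet--Urysohn hypothesis is used only at the very end, to show $a\notin\overline{\bigcup_n U_n}$. A small additional remark: for the sequential-openness argument you sketch, you need the compatibility $V_{n+1}\cap X_n=V_n$, not merely $V_n\subseteq V_{n+1}$, so that $V\cap X_k=V_k$; otherwise knowing a limit lies in $V$ does not place it in an $X_k$-open subset of $V$.
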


\begin{proof}
{\bf \ref{it:pro:properties_sm:1}).} It implies from definitions immediately.

{\bf \ref{it:pro:properties_sm:6}).} We fix a point $y\in Y$ and a base $(U_n)_{n\in\omega}$ of neighborhoods of $y$. Assume the contrary and choose a sequence $(y_{n})_{n\in\omega}$ of points from $Y$ such that $y_{n}\in U_n$ and $f(y_{n})\not\in X_n$ for all $n\in\omega$. Since $f$ is continuous at $y$ and  $y_n\to y$ in $Y$, $f(y_n)\to f(y)$ in $X$. Then there exists $m\in\omega$ such that $\{f(y_n):n\in\omega\}\subseteq X_m$, a contradiction.

{\bf \ref{it:pro:properties_sm:3}).} Let $(x_n)_{n\in\omega}$ be a convergent sequence in $X$. Then there exists $N\in\omega$ such that $\{x_n:n\in\omega\}\subseteq X_N$. Since $X_N$ is Hausdorff, the sequence $(x_n)_{n\in\omega}$ has a unique limit point. Therefore, $X$ is Hausdorff by~\cite[1.6.16]{Eng-eng}.

{\bf \ref{it:pro:properties_sm:4}).} Fix a point $a\in X$ and a closed nonempty set $F\subseteq X$ such that $a\not\in F$. Without loss of generality we may assume that $a\in X_0$. Since $X_0$ is regular, there exists an open in $X_0$ set $U_0\subseteq X_0$ such that
\begin{gather*}
 F_0= F\cap X_0\subseteq U_0\quad\mbox{and}\quad a\not\in \overline{U_0}.
\end{gather*}

Now we show that there exists an open in $X_1$ set $U_1\subseteq X_0$ such that
\begin{gather*}
U_1\cap X_0=U_0,\quad F_1= F\cap X_1\subseteq U_1\quad\mbox{and}\quad a\not\in \overline{U_1}.
\end{gather*}
Take an open in $X_1$ set $V_1\subseteq X_1$ such that $V_1\cap X_0=U_0$. Using the regularity of $X_1$ we choose an open in $X_1$ set $W_1\subseteq X_1$ such that $F_1\cup\overline{U_0}\subseteq W_1$ and $a\not\in \overline{W_1}$. Now we put
\begin{gather*}
U_1=W_1\cap(V_1\cup (X_1\setminus X_0)).
\end{gather*}
It is obvious that $a\not\in \overline{U_1}$. Since $X_0$ is closed, $U_1$ is open in $X_1$. Moreover,
\begin{gather*}
U_0\subseteq W_1\cap V_1 \subseteq U_1\quad\mbox{and}\quad
U_1\cap X_0\subseteq V_1\cap X_0=U_0.
\end{gather*}
Therefore, $U_1\cap X_0=U_0$. Since
\begin{gather*}
F_1\subseteq W_1\quad\mbox{and}\quad
F_1=F_0\cup(F_1\setminus F_0)\subseteq U_0\cup (X_1\setminus X_0)\subseteq V_1\cup (X_1\setminus X_0),
\end{gather*}
$F_1\subseteq U_1$.

Repeating this process we obtain a sequence $(U_n)_{n\in\omega}$ such that
\begin{gather}
  U_n\subseteq X_n\,\,\mbox{is open in}\,\,X_n,\label{gath:propU1}\\
  U_{n+1}\cap X_n=U_n,\label{gath:propU2}\\
  a\not\in\overline{U_n},\label{gath:propU3}\\
  F_n=F\cap X_n\subseteq U_n\label{gath:propU4}
\end{gather}
for all $n\in\omega$.

We put
$$
U=\bigcup_{n\in\omega} U_n.
$$
Assume that $a\in\overline{U}$. Then there exists a sequence $(x_n)_{n\in\omega}$ such that $x_n\in U$ for all $n\in\omega$ and $x_n\to a$, since $X$ is a Fr\'{e}chet-Urysohn  space. There exists a number $k\in\omega$ such that $\{x_n:n\in\omega\}\cup\{a\}\subseteq X_k$. Properties~(\ref{gath:propU1}) and (\ref{gath:propU2}) imply that $U\cap X_k=U_k$. Then $x_n\in U_k$ for all $n\in\omega$. Hence, $a\in\overline{U_k}$, which contradicts to (\ref{gath:propU3}).

Property (\ref{gath:propU4}) implies that $F\subseteq U$.

{\bf \ref{it:pro:properties_sm:2}).} We will prove this property for the case of perfectly normal spaces.

We fix a closed set $F$ in $X$ and show that there exists a continuous function $f:X\to [0,+\infty)$ such that $F=f^{-1}(0)$.

Since $X_0$ is perfectly normal, there exists a continuous function $f_0:X_0\to [0,1]$ such that $F\cap F_0=f_0^{-1}(0)$. Using Tietze Extension Theorem, we can extend a continuous function $g_0:X_0\cup(F\cap X_1)\to [0,1]$,
\begin{gather*}
  g_0(x)=\left\{\begin{array}{ll}
                  0, & x\in (F\cap X_1)\setminus X_0, \\
                  f_0(x), & x\in X_1,
                \end{array}
  \right.
\end{gather*}
to a continuous function $\varphi_1:X_1\to [0,1]$. Moreover, there exists a continuous function $\psi_1:X_1\to [0,1]$ such that $X_0\cup(F\cap X_1)=\psi_1^{-1}(0)$. Now for every $x\in X_1$ we put
\begin{gather*}
  f_1(x)=\varphi_1(x)+\psi_1(x).
\end{gather*}
Then $f_1:X_1\to [0,2]$  is a continuous extension of $f_0$ and $F\cap X_1=f_1^{-1}(0)$. Repeating this process inductively, we obtain a sequence $(f_n:n\in\omega)$ of continuous functions $f_n:X_n\to [0,n+1]$ such that
\begin{gather}
\label{gath:1}f_{n+1}|_{X_n}=f_n,\\
F\cap X_n=f_n^{-1}(0)
\end{gather}
for all $n\in\omega$. Now for every $x\in X$ let
\begin{gather*}
  f(x)=f_n(x)\quad\mbox{if}\quad x\in X_n.
\end{gather*}
Notice that the function $f:X\to [0,+\infty)$ is well defined because of (\ref{gath:1}). It is easy to see that $F=f^{-1}(0)$.

Property~(\ref{it:pro:properties_sm:1}) of the Proposition implies that $f$ is sequentially continuous on $X$. Consequently, $f$ is continuous on $X$.
Let us observe that $X$ is a $T_1$-space, since each $X_n$ is a $T_1$-space. Therefore, by the Vedenissoff's Theorem~\cite[1.5.19]{Eng-eng}, $X$ is perfectly normal.

The case of normal spaces $X_n$ can be proved similarly.

{\bf \ref{it:pro:properties_sm:5}).} Let $d_0$ be a metric which generates the topology of $X_0$. Since $X_0$ is closed subspace of a metrizable  space $X_1$,
 Hausdorff's theorem on extending metrics~\cite{Haus} implies that there exists a metric $d_1$ on $X_1$ such that $d_1$ is an extension of $d_0$ and generates the topology of $X_1$. Proceeding in this way we obtain a sequence $(d_n)_{n\in\omega}$ of metrics such that $d_{n+1}$ is an extension of $d_n$ and $d_n$ generates the topology of $X_n$ for all $n\in\omega$. We put $n(x,y)=\min\{k\in\omega:x,y\in X_k\}$ and define a metric $d$ on $X$ by the rule $d(x,y)=d_{n(x,y)}(x,y)$ for all $x,y\in X$.

We show that $d$ generates the topology of $X$. Fix $a\in X$ and let $(U_k)_{k\in\omega}$ be a base of neighborhoods of $a$. We claim that there exists $n\in\omega$ such that $a\in {\rm int}X_n$. Assume the contrary. Then we can choose an increasing sequence of numbers $(k_n)_{n\in\omega}$ and a sequence of points  $(x_n)_{n\in\omega}$ such that $x_n\in U_{k_n}\setminus X_n$ for every $n\in\omega$. Clearly, $x_n\to a$. It follows that there is $m\in\omega$ such that $\{x_n:n\in\omega\}\subseteq X_m$, a contradiction. So, $a\in {\rm int}X_n$ for some $n\in\omega$.  Since $d|_{X_n}$ generates the topology of $X_n$, the metric topology on $X$ and the initial one coincides.
\end{proof}

It is well-known  (see  \cite[p.~386]{Kuratowski:Top:1}) that any Baire-one map $f:X\to Y$ between a topological space $X$ and a perfectly normal space $Y$ is functionally $F_\sigma$-measurable. Therefore, we have the following application of Proposition~\ref{pro:properties_sm} to the classification of Baire-one maps.

\begin{corollary}\label{cor:2}
Let   $X$ be a topological space, $(Y_n:n\in\omega)$ be a sequentially absorbing covering of a sequential space $Y$ by  perfectly normal subspaces. Then  ${\rm B}_1(X,Y)\subseteq {\rm K}_1(X,Y)$.
\end{corollary}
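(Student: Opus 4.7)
My plan is to simply chain together two facts already at our disposal, so that the corollary becomes an immediate composition of Proposition~\ref{pro:properties_sm} with the classical Kuratowski result quoted in the paragraph preceding the corollary.

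First, I would verify that the target space $Y$ is perfectly normal. We are given that $(Y_n:n\in\omega)$ is a sequentially absorbing covering of $Y$ by perfectly normal subspaces and that $Y$ itself is sequential. These are exactly the hypotheses of Proposition~\ref{pro:properties_sm}\,(\ref{it:pro:properties_sm:2}) in the ``perfectly normal'' case, so I obtain that $Y$ is perfectly normal.

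Second, I would invoke the Kuratowski fact recalled just above the corollary: every Baire-one map from an arbitrary topological space into a perfectly normal space is functionally $F_\sigma$-measurable. Applied to any $f\in {\rm B}_1(X,Y)$ with $Y$ now known to be perfectly normal, this yields $f\in {\rm K}_1(X,Y)$, which is exactly the inclusion ${\rm B}_1(X,Y)\subseteq {\rm K}_1(X,Y)$.

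There is essentially no obstacle left at this stage, since the substantive work has been absorbed into Proposition~\ref{pro:properties_sm}\,(\ref{it:pro:properties_sm:2}); the only thing to confirm is that the hypotheses of the corollary match those of that item verbatim (sequentially absorbing covering, perfectly normal pieces, sequential ambient space), which they do. If anything, the subtle point worth double-checking is that no additional assumption on $X$ is needed: the Kuratowski statement requires perfect normality only on the codomain, so the corollary indeed holds for an \emph{arbitrary} topological space $X$, as stated.
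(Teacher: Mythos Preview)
Your proposal is correct and follows precisely the paper's intended approach: the paragraph preceding the corollary explicitly frames it as an application of Proposition~\ref{pro:properties_sm}\,(\ref{it:pro:properties_sm:2}) together with the cited Kuratowski result, and the paper offers no additional proof. Your two-step chaining is exactly this.
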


The example below shows that the condition on a covering $(Y_n:n\in\omega)$ to be sequentially absorbing is essential in Corollary~\ref{cor:2}.
\begin{example}\label{ex:Nem}
  Let $\mathbb P$  be the Niemytski plane, i.e. $\mathbb P=\mathbb R\times [0,+\infty)$, where a base of neighborhoods of $(x,y)\in \mathbb P$ with $y>0$ form open balls with the center in $(x,y)$, and a base of neighborhoods of $(x,0)$ form the sets $U\cup\{(x,0)\}$, where $U$ is an open ball which tangent to $\mathbb R\times \{0\}$ in the point $(x,0)$.  Then ${\rm B}_1(\mathbb R,\mathbb P)\not\subseteq {\rm H}_1(\mathbb R,\mathbb P)$.

  In particular, $\mathbb P$ is not perfectly normal.
\end{example}

\begin{proof}
Define the function $f:{\mathbb R}\to {\mathbb P}$ by $f(x)=(x,0)$ for all $x\in\mathbb R$.

We put $f_n(x)=(x,\frac{1}{n})$ for all  $x\in {\mathbb R}$ and $n\in
{\mathbb N}$. It is easy to see that the functions $f_n:{\mathbb R}\to {\mathbb P}$ are continuous and $f_n(x)\to f(x)$ on ${\mathbb R}$. Hence, $f\in
{\rm B}_1({\mathbb R},{\mathbb P})$.

Consider the set $F=\{(r,0):r\in {\mathbb Q}\}$, where ${\mathbb
Q}$ is the set of all rational numbers. The set ${\mathbb P}_0=\{(x,0):x\in {\mathbb R}\}$ is a discrete closed subspace of
${\mathbb P}$, and so $F$ is closed in ${\mathbb P}$. But $f^{-1}(F)={\mathbb Q}$ is not a $G_\delta$-set in ${\mathbb R}$.
Thus, $f\not\in {\rm H}_1(X,Y)$.

Notice that $\mathbb P$ admits a covering $(\mathbb P_0\cup (\mathbb R\times [\tfrac 1n,+\infty)):n\in\mathbb N)$ by closed metrizable subspaces.
\end{proof}

\begin{remark} {\rm The function $f$ from Example~\ref{ex:Nem} is not Borel measurable. In particular, for Borel nonmeasurable set $A\subseteq \mathbb R$ we have $ A=f^{-1}(B)$ while the set $\{(x,0):x\in A\}$ is closed in $\mathbb P$.}
\end{remark}

The following example shows that conditions like the first countability axiom of the range space in the previous result are essential.

\begin{example}\label{ex:firstcount} There exist a countable space $X$ such that
\begin{enumerate}
  \item[1)] $X$ is a $T_1$-space;

  \item[2)] any convergent sequence in $X$ is trivial;

  \item[3)] $X$ is a strongly $\sigma$-metrizable space;

  \item[4)] $X$ is not Hausdorff.
\end{enumerate}
\end{example}

\begin{proof} Let $B=[0,1]\cap \mathbb Q$ and $\mathscr A$ be the system of all nowhere dense $G_\delta$-subsets of $B$. It is clear that
\begin{enumerate}
  \item[(a)] $A_1\cup A_2\in\mathscr A$ for every $A_1,A_2\in\mathscr A$;

  \item[(b)] if $A_1\in\mathscr A$ and $A_2\subseteq A_1$ then $A_2\in\mathscr A$;

  \item[(c)] for every infinite set $C\subseteq B$ there exists an infinite set $A\subseteq C$ such that $A\in\mathscr A$.
  \end{enumerate}

Let $\varphi:B\to\omega$ be a bijection and
$$X=\omega \cup \{x_*, x^*\},$$
where $x_*\ne x^*$ and $x_*, x^*\not\in \omega$.

Let us describe a topology on $X$. All points $x\in\omega$ are isolated in $X$. Moreover, a set $U\subseteq X$ is a basic neighborhood of $x_*$ ($x^*$) in $X$ if
$$
U=\{x_*\}\cup \varphi(B\setminus A)
$$
$$
(U=\{x^*\}\cup \varphi(B\setminus A))
$$
for some $A\in\mathscr A$.

It is obvious that $X$ is a $T_1$-space.

Now we verify $2)$. Let $(z_n)_{n\in\omega}$ be a convergent  sequence in $X$ and $z=\lim\limits_{n\to\infty} z_n$. If $z\in\omega$ then there exists $n_0\in\omega$ such that $z_n=z$ for every $n\geq n_0$. It remains to consider the case $z\in\{x_*,x^*\}$. Suppose that $z=x_*$ and the set
$$
N=\{n\in\omega:z_n\ne x_*\}
$$
is infinite. Therefore, the set
$$
M=\{z_n:n\in N\}
$$
is infinite too. According to $(c)$ there exists an infinite set $A\in\mathscr A$ such that $$A\subseteq \varphi^{-1}(M\setminus\{x^*\}).$$ Then the set
$$
N_1=\{n\in N:\varphi^{-1}(z_n)\in A\}
$$
is infinite and
$$z_n\not\in U=\varphi(B\setminus A)\cup \{x_*\}$$
for every $n\in N_1$. But it contradicts to the equality $\lim\limits_{n\to\infty} z_n=x_*$.

In the case of $z=y^*$ we   argue analogously.

It follows from $2)$ that the sequence $(X_n)_{n\in\omega}$ of discrete subspaces
  $$
  X_n=\{x_*, x^*\}\cup \{k:0\leq k \leq n\}
  $$
of $X$ is a sequentially absorbing covering of $X$. Thus $X$ is a strongly $\sigma$-metrizable space.

It remains to note that
$$
U_*\cap U^*\ne \emptyset
$$
for every neighborhoods $U_*$ and $U^*$ of $x_*$ and $x^*$ respectively.
\end{proof}

\section{Auxiliary facts}\label{sec:22}

We define a function $\cdot^+:[0,\omega_1)\to [0,\omega_1)$,
$$
\alpha^+=\left\{\begin{array}{ll}
                  \alpha, & \alpha\in [0,\omega), \\
                  \alpha+1, &  \alpha\in [\omega,\omega_1).
                \end{array}
\right.
$$

\begin{proposition}\label{prop:ext:Balpha} Let $\alpha\in [1,\omega_1)$,  $X$ be a topological space, $(Y_n:n\in\omega)$ be a sequentially absorbing covering of a topological space $Y$ and  $f\in{\rm B}_{\alpha}(X,Y)$. Then there exists a partition $(X_n:n\in\omega)$ of $X$ by functionally ambiguous sets $X_n$ of the $\alpha^+$'th class such that $f(X_n)\subseteq Y_n$  and $f|_{X_n}\in {\rm B}_\alpha(X_n,Y_n)$ for every $n\in\omega$.
\end{proposition}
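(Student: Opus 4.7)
The plan is to build the partition from a fixed sequence $(f_k)_{k\in\omega}$ approximating $f$ pointwise, where the $f_k$ belong to Baire classes strictly below $\alpha$ (continuous maps if $\alpha=1$). The natural candidate uses the ``eventual membership'' sets
$$
A_n \;=\; \bigcup_{k_0\in\omega}\bigcap_{k\geq k_0} f_k^{-1}(Y_n),\qquad X_n \;=\; A_n\setminus A_{n-1}\quad (A_{-1}=\emptyset).
$$
Since $Y_n$ is closed in $Y$, each $f_k^{-1}(Y_n)$ is functionally multiplicative of the appropriate class, so $A_n$ is functionally additive of class $\alpha^+$ and the differences $X_n$ are functionally ambiguous of class $\alpha^+$.

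I would next verify that the $X_n$ partition $X$ and that $f(X_n)\subseteq Y_n$. Coverage follows from the sequentially absorbing property applied to the convergent sequence $\{f_k(x):k\in\omega\}\cup\{f(x)\}$ for each fixed $x\in X$: it must lie in some $Y_m$, placing $x$ into $A_m$. The inclusion $f(A_n)\subseteq Y_n$ is immediate, because on $A_n$ a tail of $(f_k(x))_k$ lies in the closed set $Y_n$, and so does its limit $f(x)$.

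The heart of the argument is to show $f|_{X_n}\in{\rm B}_\alpha(X_n,Y_n)$. I would stratify $X_n$ by the relatively closed sets $X_n\cap\bigcap_{k\geq k_0}f_k^{-1}(Y_n)$ indexed by $k_0\in\omega$; on each such piece, the tail $(f_k)_{k\geq k_0}$ restricts to a lower-Baire-class map into $Y_n$, and the restriction of $f$ is its pointwise limit as a map into $Y_n$. A diagonal gluing across the stratification, combined with transfinite induction on $\alpha$ applied to the approximants $f_k\in{\rm B}_{\beta_k}$ with $\beta_k<\alpha$, should produce the desired approximating sequence of lower-Baire-class maps $X_n\to Y_n$.

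The main obstacle will be the inductive step at limit ordinals $\alpha$: the partitions produced for each $f_k$ must be amalgamated by diagonalization, and this operation costs one level of the functional Borel hierarchy, which is exactly why the statement requires $\alpha^+$ (the $+1$ shift materialising only at infinite $\alpha$, since at successor stages $\beta+1$ the induction hypothesis is used once and no accumulation occurs). A secondary technical point is verifying that the modifications of the $f_k$ off the ``good'' strata of $X_n$ can be chosen to remain in the appropriate Baire class as maps into $Y_n$; this should rely on the functional ambiguous structure of the strata themselves, so that replacement on a stratum does not disturb the global measurability of the resulting approximants.
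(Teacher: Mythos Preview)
Your construction has a genuine gap at the very first step: the claim that $X_n=A_n\setminus A_{n-1}$ is functionally ambiguous of class $\alpha^+$ does not follow from what you have shown and is in fact false. You correctly place each $A_n$ in the functionally \emph{additive} class $\alpha^+$, but the difference of two nested sets of additive class $\beta$ is in general only ambiguous of class $\beta+1$, not $\beta$. Concretely, take $\alpha=1$, $X=[0,1]$, $Y=\mathbb R$ with $Y_0=\{0\}$ and $Y_n=[-n,n]$ for $n\ge 1$ (a sequentially absorbing covering), enumerate $\mathbb Q\cap[0,1]$ as $(q_m)_{m\in\omega}$, and set $f_k(x)=\min_{m\le k}|x-q_m|$. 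Then each $f_k$ is continuous and $f_k\to 0$ pointwise, so $f\equiv 0\in{\rm B}_1(X,Y)$; but your $A_0=\{x:\exists k_0\,\forall k\ge k_0\,f_k(x)=0\}=\mathbb Q\cap[0,1]$ and $A_1=[0,1]$, whence $X_1$ is the set of irrationals in $[0,1]$ --- a $G_\delta$ set that is not $F_\sigma$, hence not ambiguous of class $1=1^+$.

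The paper's proof avoids this by a small but decisive change: instead of $A_n=\bigcup_{k_0}\bigcap_{k\ge k_0}f_k^{-1}(Y_n)$ it \emph{synchronises the tail index with the covering index}, setting
\[
C_n=\bigcap_{k\ge n}f_k^{-1}(Y_n).
\]
Because the $Y_n$ are increasing, $(C_n)$ is still increasing, and the sequentially absorbing property (applied once to $\{f_k(x):k\in\omega\}$) still yields $\bigcup_n C_n=X$. The payoff is that $C_n$ is a bare countable intersection, with no outer union, so it lies in the functionally \emph{multiplicative} class one step below $\alpha^+$; the finite union $\bigcup_{k<n}C_k$ stays in that class, and the difference $X_n=C_n\setminus\bigcup_{k<n}C_k$ then lands exactly in the ambiguous class $\alpha^+$. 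The same choice dissolves what you call the ``heart of the argument'': on $C_n$ the entire tail $(f_k)_{k\ge n}$ already takes values in $Y_n$, so $f|_{X_n}$ is the pointwise limit of the lower-class maps $f_k|_{X_n}:X_n\to Y_n$ for $k\ge n$ --- no stratification, diagonal gluing, or transfinite induction is needed.
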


\begin{proof} We choose a sequence  $(f_n)_{n\in\omega}$ of maps $f_n\in{\rm B}_{\alpha_n}(X,Y)$ which converges to $f$ pointwisely on $X$ and $\alpha_n<\alpha$ for all $n\in\omega$. We put
$$
C_n=\bigcap_{k\ge n}f_k^{-1}(Y_k)
$$
 for $n\in\omega$.  If $\alpha<\omega_0$, then $f_k\in{\rm B}_{\alpha-1}(X,Y)\subseteq {\rm K}_{\alpha-1}(X,Y)$ and, consequently,  $C_n\in\mathcal M_{\alpha-1}(X)$; if  $\alpha=\gamma+1\ge \omega_0$ is isolated, then $f_k\in{\rm B}_\gamma(X,Y)\subseteq {\rm K}_{\alpha}(X,Y)$ and $C_n\in\mathcal M_{\alpha}(X)$; if $\alpha$ is limit, then  $\alpha_k+1<\alpha$, $f_k\in{\rm B}_{\alpha_k}(X,Y)\subseteq {\rm K}_{\alpha_k+1}(X,Y)$ and $C_n\in\mathcal M_{\alpha}(X)$. In any case the set $C_n$ is functionally ambiguous of the class $\alpha^+$ in $X$.

Notice that the family $(C_n:n\in\omega)$ is a covering of $X$. Indeed, fix $x\in X$. Since the sequence $(f_k(x))_{k=1}^\infty$ is convergent in $Y$, there exists a number $n_0$ such that $\{f_k(x):k\in\omega\}\subseteq Y_n$ for all $n\ge n_0$. Hence, $x\in C_{n_0}$.

We put $X_0=C_0$ and $X_n=C_n\setminus \bigcup_{k<n} C_k$ for all $n\ge 1$. Then $(X_n:n\in\omega)$ is a partition of $X$ by functionally ambiguous sets of the class $\alpha^+$.

Moreover, if $x\in C_n$, then   $f_k(x)\in Y_n$ for all $k\ge n$. Since $\lim\limits_{k\to\infty} f_k(x)=f(x)$ and $Y_n$ is closed, $f(x)\in Y_n$.
\end{proof}

\begin{definition}
  {\rm A covering $(X_n:n\in\omega)$ of a topological space $X$ is called {\it super sequentially absorbing} if for every sequence $(x_n)_{n\in\omega}$  such that $x_n\not\in X_n$ for all $n\in\omega$ the set $\{x_n:n\in\omega\}$ is closed in $X$.}
\end{definition}

It is easy to see that every super  sequentially absorbing covering is sequentially absorbing.

Let us observe that strict inductive limits of sequences of locally convex metrizable spaces, or  small box products of sequences of metrizable spaces admits super  sequentially absorbing coverings. Moreover, any sequentially absorbing covering of a first countable space is super sequentially absorbing.

\begin{proposition}\label{prop:aux_sigma_metr}
Let $X$ be a topological space, \mbox{$\mathscr Y=(Y_n:n\in\omega)$} be a sequentially absorbing covering of a topological space $Y$ and $f:X\to Y$ be a map. If
\begin{enumerate}
  \item[1)] $X$ is a hereditarily Baire  metrizable separable space, $\mathscr Y$ is super sequentially absorbing and $f\in{\rm H}_1(X,Y)$, or

  \item[2)] $X$ is a first countable perfect paracompact space and $f$ is barely continuous,
\end{enumerate}
then there exists a covering $(C_n:n\in\omega)$ of $X$ by $F_\sigma$-sets $C_n$ such that $f(C_n)\subseteq Y_n$ for all $n\in\omega$.
\end{proposition}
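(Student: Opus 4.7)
My plan is a transfinite induction peeling $X$ layer by layer into $F_\sigma$-pieces. Put $F_0=X$, and given a nonempty closed $F_\alpha\subseteq X$, define for each $n\in\omega$
\[
G_{\alpha,n}=\bigcup\{V:V\text{ is relatively open in }F_\alpha\text{ and }f(V)\subseteq Y_n\},
\]
which is relatively open in $F_\alpha$. In both cases $X$ is perfect (in case~(2) by hypothesis, in case~(1) because metrizable spaces are perfect), so every open subset of $X$ is $F_\sigma$ in $X$, and intersecting with the closed set $F_\alpha$ keeps $G_{\alpha,n}$ in the class $F_\sigma$. Set $U_\alpha=\bigcup_n G_{\alpha,n}$, $F_{\alpha+1}=F_\alpha\setminus U_\alpha$, and $F_\lambda=\bigcap_{\beta<\lambda}F_\beta$ at limits, and finally $C_n=\bigcup_\alpha G_{\alpha,n}$. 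Once the induction is shown to reach $F_\gamma=\emptyset$ for some countable $\gamma$, each $C_n$ is a countable union of $F_\sigma$-sets in $X$, hence $F_\sigma$, and $\bigcup_n C_n=\bigcup_\alpha U_\alpha=X$.

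The key step at each successor stage is that $U_\alpha\ne\emptyset$ whenever $F_\alpha\ne\emptyset$, i.e.\ that some nonempty relatively open subset of $F_\alpha$ is mapped into some $Y_n$. In case~(2), the barely continuous property supplies a point $x$ of continuity of $f|_{F_\alpha}$, and since $F_\alpha$ inherits first countability from $X$, Proposition~\ref{pro:properties_sm}(\ref{it:pro:properties_sm:6}) applied to $f|_{F_\alpha}$ produces a relatively open neighborhood $V\ni x$ in $F_\alpha$ with $f(V)\subseteq Y_n$ for some $n$, so $V\subseteq G_{\alpha,n}\subseteq U_\alpha$. In case~(1), $F_\alpha$ is Baire (by hereditary Baireness of $X$), and the decomposition $F_\alpha=\bigcup_n F_\alpha\cap f^{-1}(Y_n)$ is a countable cover of $F_\alpha$ by relatively $G_\delta$ sets (since $f\in\mathrm{H}_1$ and each $Y_n$ is closed); by Baire category some $F_\alpha\cap f^{-1}(Y_{n_0})$ is dense in a nonempty relatively open $V\subseteq F_\alpha$, and one then needs to upgrade this density to the existence of a nonempty relatively open $V'\subseteq V$ on which $f(V')\subseteq Y_n$ for some $n$, using the super sequentially absorbing property of $(Y_n)$ together with metrizability of $X$.

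For termination of the chain $(F_\alpha)$ at a countable ordinal, in case~(1) one invokes the hereditary Lindel\"of property of the separable metrizable space $X$: a strictly decreasing transfinite chain of closed sets in a hereditarily Lindel\"of space has countable length, because the complementary increasing chain of open sets must stabilize by a countable subcover. An analogous argument using perfectness and paracompactness handles case~(2). The main obstacle I anticipate is the Case~(1) upgrade step: concretely, if no relatively open subset of $V$ were mapped into any single $Y_n$, one should diagonalize to produce a sequence $x_k\in V$ converging in the metric of $X$ to some $x\in V$ with $f(x_k)\notin Y_k$ for every $k$; super sequential absorption then forces $\{f(x_k):k\in\omega\}$ to be closed in $Y$, and this closedness must be combined with the $F_\sigma$-measurability of $f$ (working with the $F_\sigma$-preimage of the open set $Y\setminus\{f(x_k):k\in\omega\}$ when $f(x)$ avoids this set, and a case analysis when it does not) to derive the required contradiction.
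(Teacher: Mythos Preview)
Your transfinite peeling scheme is essentially the paper's approach for case~(1), and your termination argument there (hereditary Lindel\"ofness of separable metrizable spaces) is correct. However, the ``upgrade'' in your last paragraph does not go through as written. Producing a \emph{convergent} sequence $x_k\to x$ with $f(x_k)\notin Y_k$ and then analysing whether $f(x)$ lies in the closed set $\{f(x_k)\}$ gives no contradiction: a $G_\delta$ set can easily contain a sequence without containing its limit. The paper instead argues globally: assuming every $f^{-1}(Y_n)$ has empty interior in $F_\alpha$, it uses a countable base $(B_k)$ of $F_\alpha$ to build $x_k\in B_{k-1}$ with $f(x_k)\notin Y_{n_{k-1}}$ along an increasing sequence $n_0<n_1<\cdots$. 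The range $F=\{f(x_k)\}$ is then closed by super absorption and disjoint from $Y_{n_0}$, so $f^{-1}(F)$ is a \emph{dense} $G_\delta$ contained in $f^{-1}(Y\setminus Y_{n_0})$. Doing this for every starting index $n_0$ and intersecting contradicts Baireness. Density of $\{x_k\}$, not convergence, is the point.

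The genuine failure is in case~(2). Your claimed termination ``using perfectness and paracompactness'' is false: a discrete space of cardinality $\aleph_1$ is first countable, perfect, and paracompact, yet admits a strictly decreasing $\omega_1$-chain of closed sets. So the transfinite peeling need not stop at a countable ordinal, and then $C_n=\bigcup_\alpha G_{\alpha,n}$ is an uncountable union of $F_\sigma$ sets with no reason to be $F_\sigma$. The paper abandons transfinite induction here and instead lets $\mathscr G$ be the family of open sets $G$ admitting a countable $F_\sigma$ cover refining $(f^{-1}(Y_n))$, shows via a locally finite refinement (this is where paracompactness enters) that any open subset of $\bigcup\mathscr G$ lies in $\mathscr G$, and then uses a continuity point of $f$ restricted to the closed complement, together with Proposition~\ref{pro:properties_sm}(\ref{it:pro:properties_sm:6}), to enlarge $\bigcup\mathscr G$ unless it is already all of $X$. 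This gluing-by-paracompactness is the missing idea in your case~(2).
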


\begin{proof} {\bf 1).} Let  $\mathscr B=(B_k:k\in\omega)$ be a countable base of the space $X$.

We show that there is $n\in\omega$ such that ${\rm int}\, f^{-1}(Y_n)\ne\emptyset$. Assume the contrary. Then we claim that every set $A_n=f^{-1}(Y\setminus Y_n)$ contains a dense $G_\delta$-subset of $X$. Indeed, fix $n\in\omega$ and put $n_0=n$. According to our assumption the set $A_{n_0}$ is   dense in $X$. Therefore, there exist a point $x_1\in B_0\cap A_{n_0}$ and a number $n_1>n_0$ such that $f(x_1)\in Y_{n_1}\setminus Y_{n_0}$. Then, since $A_{n_1}$ is dense in $X$, there exist a point $x_2\in B_1\cap A_{n_1}$ and a number $n_2>n_1$ such that $f(x_2)\in Y_{n_2}\setminus Y_{n_1}$. Repeating this process, we obtain an increasing sequence of numbers
$$
n=n_0<n_1<\dots<n_k<\dots
$$
and a sequence $(x_k)_{k\in\mathbb N}$ of points in $X$ such that
$$
x_k\in B_{k-1}\quad\mbox{and}\quad f(x_k)\in Y_{n_k}\setminus Y_{n_{k-1}}
$$
for all $k\in\mathbb N$. Since the covering $\mathscr Y$ is super sequentially absorbing, the set
$$
F=\{f(x_k):k\in\mathbb N\}
$$
is closed in  $Y$. Then $f^{-1}(F)$ is a  $G_\delta$-set in $X$, since $f\in{\rm H}_1(X,Y)$. Moreover,
$$
\{x_k:k\in\mathbb N\}\subseteq f^{-1}(F)\subseteq A_{n}.
$$
Hence, $A_{n}$ contains everywhere dense $G_\delta$-subset of $X$.

Since $X$ is a  Baire space, $\bigcap_{n\in\omega} A_n\ne\emptyset$. On the other side,
$$
\bigcap_{n\in\omega} A_n=\bigcap_{n\in\omega} f^{-1}(Y\setminus Y_n)=\emptyset,
$$
which implies a contradiction.

Therefore, there exists $k_0\in {\omega}$ such that  ${\rm int}\, f^{-1}(Y_n)\ne\emptyset$ for all $n\ge k_0$.

We put $E_{n,0}={\rm int}\,f^{-1}(Y_n)$ for all $n\in\omega$, $E_0=\bigcup_{n\in\omega}^\infty G_{n,0}$ and $F_0=X$. Since $X$ is hereditarily Baire, $F_1=F_0\setminus E_0$ is a  closed  Baire subspace of $F_0$. Consider a map $f_1=f|_{F_1}\in {\rm H}_1(F_1,Y)$ and put
$E_{n,1}={\rm int}_{F_1}f_1^{-1}(Y_n)$ for every  $n\in\omega$. By similar arguments one can show that the set   $E_1=\bigcup_{n=1}^\infty E_{n,1}$ is nonempty and open in $F_1$. Since $F_1$ is closed in $X$,  $E_1$ is an  $F_\sigma$-set in $X$.

Assume that for some $\alpha<\omega_1$ the sets $F_\xi$ and $E_\xi$ are constructed for all $\xi<\alpha$ and
\begin{enumerate}
\item[(i)] $F_\xi$ is closed in  $X$ for all  $\xi<\alpha$;

\item[(ii)] $F_\xi\subseteq F_\eta$ for all for all  $\eta<\xi<\alpha$;

\item[(iii)] $E_\xi=\bigcup_{n=1}^\infty E_{n,\xi}$, where $E_{n,\xi}={\rm int}_{F_\xi}f_\xi^{-1}(Y_n)$ and $f_\xi=f|_{F_\xi}$;

\item[(iv)] if $F_\xi\ne\emptyset$, then  $E_\xi\ne\emptyset$;

\item[(v)] if $\xi=\eta+1$, then   $F_\xi=F_\eta\setminus E_\eta$; if $\xi$ is limit, then $F_\xi=\bigcap_{\eta<\xi}F_\eta$.
\end{enumerate}
We put
\begin{gather*}
F_\alpha=\left\{\begin{array}{ll}
                  F_\xi\setminus E_\xi, & \alpha=\xi+1, \\
                  \bigcap\limits_{\xi<\alpha}F_\xi, & \alpha\,\,\, \mbox{is limit}.
                \end{array}
\right.
\end{gather*}
Let $f_\alpha=f|_{F_\alpha}$, $E_{n,\alpha}={\rm int}_{F_\alpha}f_\alpha^{-1}(Y_n)$ and $E_\alpha=\bigcup_{n=1}^\infty E_{n,\alpha}$.

Since $(F_\xi:\xi<\omega_1)$ is a strictly decreasing sequence of closed sets in the second countable space $X$, there exists  $\alpha<\omega_1$ such that $F_\alpha=\emptyset$. We denote $\beta=\inf\{\alpha<\omega_1:F_{\alpha}=\emptyset\}$. Then
$$
X=\bigcup_{\xi<\beta}(F_\xi\setminus F_{\xi+1})=\bigcup_{\xi<\beta}E_\xi=\bigcup_{n\in\omega} \bigcup_{\xi<\beta}E_{n,\xi}.
$$
We put
$$
C_n=\bigcup_{\xi<\beta} E_{n,\xi}.
$$
Then $C_n$ is an $F_\sigma$-subset of $X$. Moreover, $X=\bigcup_{n\in\omega} C_n$ and $f(C_n)\subseteq Y_n$ for all $n\in {\omega}$.

{\bf 2).} Let $\mathscr G$ be a collection of all open subsets of $X$ satisfying the following condition: for every $G\in\mathscr G$ there exists a countable covering $\mathscr C$ of $G$ by $F_\sigma$-sets such that
\begin{gather}\label{gath:prec}
\mathscr C\prec (f^{-1}(Y_n):n\in\omega).
\end{gather}
Let $U=\bigcup_{G\in\mathscr G}G$.

We prove that
\begin{gather}\label{gath:imp}
\forall\,\, O\subseteq X, O\,\,\mbox{is open in}\,\,X, \,\, O\subseteq U \Longrightarrow O\in\mathscr G.
\end{gather}
Fix an open set $O\subseteq U$ and notice that $O$ is a paracompact space as an $F_\sigma$-subspace of $X$.  Then there  exists a locally finite   covering $\mathscr V$ of $O$ by open subsets such that $\mathscr V\prec \mathscr G$. For every $V\in\mathscr V$ we take $G_V\in\mathscr G$ with $V\subseteq G_V$. Let $\mathscr C_{V}=(C_{V,n}:n\in\omega)$ be a countable covering of $G_V$ by $F_\sigma$-sets satisfying~(\ref{gath:prec}). We put $B_{V,n}=V\cap C_{V,n}$ for all $V\in\mathscr V$ and $n\in\omega$. Then every $B_{V,n}$ is an $F_\sigma$-subset of $O$. It is easy to see that the family  $(B_{V,n}:V\in\mathscr V)$ is locally finite in $O$. Now let $B_n=\bigcup_{V\in\mathscr V} B_{V,n}$ for all $n\in\omega$. Then every $B_n$ is an $F_\sigma$-subset of $O$ (and of $X$) as a union of a locally finite family of $F_\sigma$-sets. Moreover, $(B_n:n\in\omega)\prec (f^{-1}(Y_n):n\in\omega)$ and $\bigcup_{n\in\omega} B_n=O$. Hence, $O\in\mathscr G$.

Now we show that $U=X$. To obtain a contradiction, assume that the set $F=X\setminus U$ is not empty. Take a point $x_0\in F$ of continuity of the restriction $f|_F$. Then Proposition~\ref{pro:properties_sm}~(\ref{it:pro:properties_sm:6}) implies that there exist an open neighborhood $O$ in $X$ and a number $m\in\omega$ such that $f(O\cap F)\subseteq Y_m$. Since $O\setminus F$ is an open subset of $U$, property (\ref{gath:imp}) implies that $O\setminus F\in\mathscr G$. Then $O=(O\cap F)\cup(O\setminus F)\in\mathscr G$. But $O\setminus U\ne\emptyset$, a contradiction.

Finally, property (\ref{gath:imp}) implies that $U\in\mathscr G$.
\end{proof}

\begin{remark}
{\rm  It is clear that Proposition~\ref{prop:aux_sigma_metr} remains valid if $X$ admits a countable covering by $F_\sigma$-subsets $X_n$ such that every $X_n$ is hereditarily Baire metrizable separable space or first countable perfect paracompact space.}
\end{remark}

The following   example shows that an analog of Proposition \ref{prop:aux_sigma_metr} is not true even for countable $Y$ and for $f\in {\rm B}_1(\mathbb R,Y)$.

 \begin{example} For the Riemann function $f\in {\rm B}_1(\mathbb R\,[0,1])$,
 $$ f(x)=\left\{\begin{array}{ll}
                         0, & x\not\in \mathbb Q;\\
                         \frac1n, & x=\frac mn\,\,\,\, {\rm is}\,\,\,\,{\rm irreducible},
                       \end{array}
 \right.
$$
there is no covering $(C_n:n\in\omega)$ of $[0,1]$ by $F_\sigma$-sets  such that
$$
f(C_n)\subseteq Y_n=\{0\}\cup [\tfrac{1}{n+1},1]
$$
for all $n\in\omega$.
\end{example}

Next example shows that Proposition \ref{prop:aux_sigma_metr} is not true for strongly $\sigma$-metrizable~$Y$.

 \begin{example} There exists a countable space $Y$ such that
\begin{enumerate}
  \item[1)] $Y$ is a perfectly normal space;

  \item[2)] any convergent sequence in $Y$ is trivial;

  \item[3)] $Y$ is a strongly $\sigma$-metrizable space;

  \item[4)] there exists a map $f\in {\rm H}_1([0,1], Y)$ such that for every sequentially absorbing covering $(Y_n:n\in\omega)$ of $Y$ by metrizable subspaces $Y_n$ there is no covering $(C_n:n\in\omega)$ of $[0,1]$ by $F_\sigma$-sets  with $f(C_n)\subseteq Y_n$ for all $n\in\omega$.
\end{enumerate}

\end{example}

\begin{proof} Let
$$
Y=\{y_*\}\cup \omega
$$
be the space constructed in Example~\ref{ex:firstcount}. Clearly, $Y$ satisfies  conditions 1) -- 3).

We proof that property $4)$ holds. Consider the function $f:[0,1]\to Y$,
$$ f(x)=\left\{\begin{array}{ll}
                         \varphi(x), & x\in B;\\
                         y_*, & x\in [0,1]\setminus B.
                       \end{array}
 \right.
$$

Let us observe that the set
$$
D=f^{-1}(Y\setminus Z)
$$
is dense in $[0,1]$ for every metrizable subspace $Z$ of $Y$. Indeed, if $y_*\not\in Z$, then
$[0,1]\setminus B\subseteq D$ and $D$ is dense in $[0,1]$. Now let $y_*\in Z$. It follows from $2)$ that $Z$ is a discrete space. Therefore, there exists a neighborhood $U_0$ of $y_0$ in $Y$ such that
$$
U_*\cap Z=\{y_*\},
$$
that is, there exists $A_0\in\mathscr A$ such that
$$
\varphi(B\setminus A_0)\cap Z=\emptyset.
$$
Therefore,
$$
B\setminus A_0\subseteq f^{-1}(Y\setminus Z)=D.
$$
Since $A_0$ is nowhere dense in $[0,1]$ and $B$ is dense in $[0,1]$, $D$ is dense in $[0,1]$.

Assume that there exist a sequentially absorbing covering $(Y_n)_{n\in\omega}$ of $Y$ by metrizable subspaces and a covering $(C_n:n\in\omega)$ of $[0,1]$ by $F_\sigma$-sets with $f(C_n)\subseteq Y_n$ for every $n\in\omega$. Then each  $D_n=X\setminus C_n$ is a dense $G_\delta$-subset of $[0,1]$. Therefore, the set
$\bigcap_{n\in\omega} D_n$ is dense in $[0,1]$. It follows that $\bigcup_{n\in\omega} C_n\ne [0,1]$, a contradiction.
\end{proof}

\section{Relations between barely continuous,  $F_\sigma$-measurable and Baire-one maps}\label{sec:33}
\begin{definition}
  {\rm A set $A\subseteq X$ is called {\it resolvable in the sense of Hausdorff} or {\it an $H$-set}~\cite[12.V]{Kuratowski:Top:1}, if for every closed set $F\subseteq X$ the set $\overline{F\cap A}\cap\overline{F\setminus A}$ is nowhere dense in $F$.
}\end{definition}

\begin{definition}
{  A map $f:X\to Y$ between topological spaces $X$ and  $Y$ is called  {\it $H_\sigma$-measurable}, if the preimage of any open  set   $V$ in $Y$ is a union of a sequence of $H$-subsets of $X$.}
\end{definition}

Since any open or closed set is an $H$-set, any $F_\sigma$-set is $H_\sigma$. Hence, every $F_\sigma$-measurable map between topological spaces is $H_\sigma$-measurable.

If $X$ is a perfect paracompact space, then any $H$-set in $X$ is $F_\sigma$ and $G_\delta$ simultaneously; the converse is true in a hereditarily Baire space $X$~\cite{Karlova:FRes:2011}.

Koumoullis~\cite{Kum} proved that every barely continuous map $f:X\to Y$ is $H_\sigma$-measurable if $X$ is a topological space and $Y$ is a metric space.
Moreover, it was shown in~\cite[Theorem 4.12]{Kum} that for a map $f:X\to Y$ between a hereditarily Baire metric space $X$ and a metric space $Y$ the following conditions are equivalent:
\begin{itemize}
  \item $f$ is barely continuous;

  \item $f$ is $F_\sigma$-measurable.
\end{itemize}

We develop the results of Koumoullis in this section.

The  fact below is proved completely similarly to Theorem~2~\cite[p.~395]{Kuratowski:Top:1}.
\begin{proposition}\label{prop:barely_is_Hmeas}
 Every barely continuous map $f:X\to Y$ between a topological space $X$ and a perfect space $Y$ is  $H_\sigma$-measurable.
\end{proposition}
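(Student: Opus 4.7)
The plan is to follow Kuratowski's proof of Theorem~2 on p.~395 of~\cite{Kuratowski:Top:1}, replacing the metric target with a perfect one. Fix an open set $V\subseteq Y$; the goal is to cover $f^{-1}(V)$ by countably many $H$-sets. Since $Y$ is perfect, $Y\setminus V$ is $G_\delta$ in $Y$, whence $V=\bigcup_{n\in\omega}F_n$ for an increasing sequence of closed sets $F_n\subseteq Y$, and consequently $f^{-1}(V)=\bigcup_{n\in\omega} A_n$ with $A_n:=f^{-1}(F_n)$. The matter therefore reduces to showing that each $A_n$, after a suitable refinement of the layers $(F_n)$, is resolvable.

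The main step is a contradiction argument driven by bare continuity. Suppose that some $A_n$ is not an $H$-set. By the standard characterization of resolvable sets, there is a nonempty closed $E\subseteq X$ in which $E\cap A_n$ and $E\setminus A_n$ are both dense. Bare continuity produces a point $x_0\in E$ at which the restriction $f|_E$ is continuous, and a dichotomy on the location of $f(x_0)$ yields the contradiction. If $f(x_0)\notin F_n$, the open neighbourhood $Y\setminus F_n$ of $f(x_0)$, combined with continuity of $f|_E$ at $x_0$, produces a relatively open $O\subseteq E$ with $f(O)\subseteq Y\setminus F_n$, so $O\cap A_n=\emptyset$, contradicting density of $E\cap A_n$ in $E$. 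Symmetrically, if $f(x_0)\in\mathrm{int}_Y F_n$, the open neighbourhood $\mathrm{int}_Y F_n\subseteq F_n$ yields a relatively open $O\subseteq E\cap A_n$, contradicting density of $E\setminus A_n$.

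The principal obstacle is the boundary case $f(x_0)\in F_n\setminus\mathrm{int}_Y F_n$, which a priori escapes the dichotomy, and in a merely perfect space there is no metric distance function to rule it out. In the prototype (metric) setting Kuratowski eliminates this case by the buffer $F_n\subseteq\mathrm{int}_Y F_{n+1}$ coming from the choice $F_n=\{y:d(y,Y\setminus V)\ge 1/n\}$. To reproduce this feature from perfectness alone, I would exploit the $G_\delta$-representation $Y\setminus V=\bigcap_{n\in\omega} G_n$ with $G_{n+1}\subseteq G_n$ open, and interleave it with an auxiliary $G_\delta$-decomposition of each closed layer, obtaining a doubly-indexed refinement $V=\bigcup_{n,k}F_{n,k}$ satisfying $F_{n,k}\subseteq\mathrm{int}_Y F_{n,k+1}$. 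On this refined filtration the dichotomy closes: the boundary case on level $(n,k+1)$ is absorbed into the open neighbourhood $Y\setminus F_{n,k}$, forcing the same contradiction. The countable family $(f^{-1}(F_{n,k}))_{n,k}$ then realises $f^{-1}(V)$ as a union of $H$-sets, completing the proof.
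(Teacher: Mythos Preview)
Your plan has a genuine gap: the refined filtration does not close the dichotomy. Suppose you have achieved $F_{n,k}\subseteq\mathrm{int}_Y F_{n,k+1}$ and are testing whether $A_{n,k+1}:=f^{-1}(F_{n,k+1})$ is resolvable. In the boundary case $f(x_0)\in F_{n,k+1}\setminus\mathrm{int}_Y F_{n,k+1}$ you correctly note $f(x_0)\in Y\setminus F_{n,k}$, so continuity of $f|_E$ at $x_0$ yields a relatively open $O\ni x_0$ with $O\cap A_{n,k}=\emptyset$. But this says nothing about $O\cap A_{n,k+1}$: points of $O$ may still land in $F_{n,k+1}\setminus F_{n,k}$, and neither density hypothesis on $A_{n,k+1}$ is contradicted. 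Passing to $\mathrm{int}_Y F_{n,k+2}$ from above runs into the symmetric problem. The buffer lets you compare adjacent levels, but the set whose resolvability you are testing does not change, so no contradiction emerges. A second, independent issue is that a perfect space need not be regular, so the refinement $F_{n,k}\subseteq\mathrm{int}_Y F_{n,k+1}$ is itself not guaranteed; your ``auxiliary $G_\delta$-decomposition'' sketch is not an actual construction.

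The paper sidesteps both problems by not trying to make any single $f^{-1}(F_n)$ resolvable. With $F=Y\setminus V$ closed and $V=\bigcup_n F_n$ ($F_n$ closed, by perfectness), it invokes the Hausdorff separation lemma for $H$-sets \cite[12.III]{Kuratowski:Top:1}: for each $n$ one seeks a resolvable $B_n$ with $f^{-1}(F)\subseteq B_n\subseteq X\setminus f^{-1}(F_n)$. If no such $B_n$ exists, the lemma produces a nonempty closed $A\subseteq X$ in which both $f^{-1}(F)$ and $f^{-1}(F_n)$ are dense; at a continuity point $p$ of $f|_A$ one then gets $f(p)\in\overline F\cap\overline{F_n}=F\cap F_n=\emptyset$. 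This uses only that $F$ and $F_n$ are disjoint \emph{closed} sets in $Y$---no interior buffer is needed. Then $f^{-1}(F)=\bigcap_n B_n$ and $f^{-1}(V)=\bigcup_n(X\setminus B_n)$ is $H_\sigma$.
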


\begin{proof}
         Let $F$ be a closed subset of $Y$. Since $Y$ is perfect, there exists a sequence $(F_n)_{n\in\omega}$ of closed subsets of $Y$ such that    $G=Y\setminus F=\bigcup_{n\in\omega} F_n$.  We denote $E=f^{-1}(F)$ and $H_n=f^{-1}(F_n)$ for  $n\in\omega$.

We fix $n\in\omega$ and show that there exists an $H$-set  $B\subseteq X$ such that
   $E\subseteq B\subseteq X\setminus H_n$. Assume that it is not the case. Then (see \cite[12.III]{Kuratowski:Top:1}) there exists a closed non-empty set  $A\subseteq X$ such that
   \begin{equation}\label{eq:333}
  A=\overline{A\cap E}\cap\overline{A\cap H_n}.
  \end{equation}
  Since $f$ is barely continuous, there exists a point $p$ of continuity of the restriction $g=f|_A$. It follows from (\ref{eq:333}) that $A\subseteq \overline{A\cap E}=\overline{g^{-1}(F)}$. The continuity of $g$ at the point $p$ implies that $f(p)=g(p)\in \overline{g(g^{-1}(F))}\subseteq F$. Similarly, $f(p)=g(p)\in \overline{g(g^{-1}(F_n))}\subseteq F_n$, which contradicts to the equality  $F\cap F_n=\emptyset$.

Hence, for every $n\in\omega$ there exists an $H$-set $B_n\subseteq X$ such that
$$
f^{-1}(F)\subseteq B_n\subseteq X\setminus f^{-1}(F_n).
$$
Then
$$
f^{-1}(F)=\bigcap_{n\in\omega} B_n.
$$
Therefore, $f$ is $H_\sigma$-measurable.
\end{proof}

\begin{proposition}\label{th:H1B}
  Let $X$ be a perfect paracompact space, $Y$ be a perfect space and $f:X\to Y$ be an $H_\sigma$-measurable map. Then  $f$ is $F_\sigma$-measurable.
\end{proposition}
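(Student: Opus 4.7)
The plan is essentially a direct appeal to the cited structural result about $H$-sets. Given an arbitrary open set $V\subseteq Y$, I would invoke the $H_\sigma$-measurability of $f$ to write
\begin{equation*}
  f^{-1}(V)=\bigcup_{n\in\omega} A_n
\end{equation*}
where each $A_n$ is an $H$-subset of $X$. The goal is to argue that this union is already of the form required of an $F_\sigma$-set.

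The key step is the fact quoted just before Proposition~\ref{prop:barely_is_Hmeas} from~\cite{Karlova:FRes:2011}: in a perfect paracompact space $X$, every $H$-set is simultaneously $F_\sigma$ and $G_\delta$. Applying this to each $A_n$, I obtain a representation $A_n=\bigcup_{m\in\omega}F_{n,m}$ with every $F_{n,m}$ closed in $X$. Then
\begin{equation*}
  f^{-1}(V)=\bigcup_{n,m\in\omega} F_{n,m}
\end{equation*}
is a countable union of closed sets, hence $F_\sigma$ in $X$. Since $V$ was an arbitrary open subset of $Y$, the map $f$ is $F_\sigma$-measurable.

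There is essentially no obstacle beyond citing the correct external result; the perfectness of $Y$ does not seem to be invoked in this direction, and the argument is purely a matter of substituting the $F_\sigma$-representation of each $H$-set into the given union. The only point requiring mild care is making sure that the representation of $A_n$ as $F_\sigma$ is truly internal to $X$ (not relative to $A_n$), which is exactly what the cited result from \cite{Karlova:FRes:2011} provides.
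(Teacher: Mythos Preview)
Your argument is correct and follows essentially the same route as the paper: both proofs reduce immediately to the cited fact from \cite{Karlova:FRes:2011} that in a perfect paracompact space every $H$-set is simultaneously $F_\sigma$ and $G_\delta$. The only cosmetic difference is that the paper works dually, fixing a closed $F\subseteq Y$, writing $f^{-1}(F)=\bigcap_{n\in\omega}B_n$ with each $B_n$ an $H$-set (using that complements of $H$-sets are $H$-sets), and concluding that the intersection is $G_\delta$; your version with open $V$ and the $F_\sigma$ side is equally valid and avoids the complement step. Your observation that the perfectness of $Y$ is not actually used in this proposition is also accurate.
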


\begin{proof} Fix a closed set $F\subseteq Y$. There exists a sequence $(B_n)_{n\in\omega}$ of $H$-subsets of $X$ such that $f^{-1}(F)=\bigcap_{n\in\omega}B_n$. According to \cite{Karlova:FRes:2011} every $B_n$ is $G_\delta$ in  $X$. Therefore,  $f^{-1}(F)$ is $G_\delta$ and $f\in {\rm H}_1(X,Y)$.
\end{proof}

Propositions~\ref{prop:barely_is_Hmeas} and  \ref{th:H1B} imply the following fact.
\begin{corollary}\label{cor:3}
     Let $X$ be a perfect paracompact space, $Y$ be a perfect space. Then every barely continuous map $f:X\to Y$   is $F_\sigma$-measurable.
\end{corollary}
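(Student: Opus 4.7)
The plan is essentially a two-line composition of the two preceding propositions, so there is no real obstacle; the work has already been done.

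First I would invoke Proposition~\ref{prop:barely_is_Hmeas}: since $Y$ is perfect and $f\colon X\to Y$ is barely continuous, $f$ is $H_\sigma$-measurable. This step requires nothing from $X$ beyond being a topological space, which is granted.

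Then I would feed this conclusion into Proposition~\ref{th:H1B}: with $X$ perfect paracompact, $Y$ perfect, and $f$ now known to be $H_\sigma$-measurable, we conclude that $f\in {\rm H}_1(X,Y)$, i.e.\ $f$ is $F_\sigma$-measurable.

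Since both hypotheses on $X$ and $Y$ in the corollary match exactly the combined hypotheses of the two propositions (the first only needs $Y$ perfect, the second uses both $X$ perfect paracompact and $Y$ perfect), no additional bookkeeping is necessary. The only ``hard'' part is recognizing that the chain of implications barely continuous $\Rightarrow$ $H_\sigma$-measurable $\Rightarrow$ $F_\sigma$-measurable holds under the stated assumptions, but this is precisely what Propositions~\ref{prop:barely_is_Hmeas} and~\ref{th:H1B} establish. Thus the proof reduces to a one-sentence citation of these two results.
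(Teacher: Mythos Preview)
Your proposal is correct and matches the paper's approach exactly: the paper presents this corollary without a separate proof, stating only that Propositions~\ref{prop:barely_is_Hmeas} and~\ref{th:H1B} imply it, which is precisely the two-step chain you describe.
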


Now we establish the inverse implication in some special cases.

\begin{definition}{\rm
A transfinite sequence  $\mathscr U=(U_\xi:\xi\in[0,\alpha])$ of subsets of a topological space $X$  is {\it regular in $X$}, if
\begin{enumerate}[label=(\alph*)]
  \item each $U_\xi$ is open in $X$;

  \item $\emptyset=U_0\subset U_1\subset U_2\subset\dots\subset U_\alpha=X$;

  \item\label{it:c} $U_\gamma=\bigcup_{\xi<\gamma} U_\xi$ for every limit ordinal $\gamma\in[0,\alpha)$.
\end{enumerate}
For every $\xi\in[0,\alpha)$ we put
$$
P_{\xi+1}=U_{\xi+1}\setminus U_\xi.
$$
Then the partition $\mathscr P=(P_\xi:\xi\in[1,\alpha))$ of $X$ is called {\it regular partition associated with $\mathscr U$.}}
\end{definition}

It is well-known~\cite[12.II]{Kuratowski:Top:1} that a set $A$ is resolvable in the sense of Hausdorff if and only if  there exists an ordinal $\alpha$ and a    decreasing sequence $(F_\xi)_{\xi\in[0,\alpha)}$ of closed subsets of $X$ such that
\begin{gather}
   A=\bigcup_{\xi<\alpha,\,\xi {\footnotesize \mbox{\,\,is odd}}}(F_{\xi}\setminus F_{\xi+1}),\label{g:1}\\
   F_0=X\label{g:2},\\
   F_\gamma=\bigcap_{\xi<\gamma}F_\xi\,\,\,\mbox{if}\,\, \gamma \,\,\mbox{is a limit ordinal}\,\,\mbox{or if}\,\,\gamma=\alpha.\label{g:3}
\end{gather}
Clearly, if $U_\xi=X\setminus F_\xi$ for every $\xi<\alpha$, then the sequence $(U_\xi:\xi<\alpha)$ is regular.

\begin{lemma}\label{lem:dense_base}
  If $\mathscr P=(P_\xi:\xi\in[0,\alpha))$ is a regular partition of a topological space $X$, then the set
  \begin{gather}\label{gath:2}
  U_{\mathscr P}=\bigcup_{\xi\in[0,\alpha)} {\rm int}P_\xi
  \end{gather}
  is dense in $X$.
\end{lemma}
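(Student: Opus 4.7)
The plan is to argue by contradiction using the well-ordering of the ordinals, exploiting the openness of the sets $U_\xi$ and the continuity condition~\ref{it:c} at limit ordinals. Assume $U_{\mathscr P}$ is not dense in $X$; then there is a nonempty open set $V\subseteq X$ with $V\cap U_{\mathscr P}=\emptyset$. Since $U_\alpha=X$ and $V\neq\emptyset$, the set
$$
S=\{\xi\in[0,\alpha]:V\cap U_\xi\ne\emptyset\}
$$
contains $\alpha$, so by well-ordering it has a least element $\xi_0$.

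Next I would rule out the ``bad'' possibilities for $\xi_0$. One has $\xi_0\neq 0$ because $U_0=\emptyset$. If $\xi_0$ were a limit ordinal, then by~\ref{it:c} we would have $U_{\xi_0}=\bigcup_{\xi<\xi_0}U_\xi$, so picking any $x\in V\cap U_{\xi_0}$ we could find $\xi<\xi_0$ with $x\in U_\xi$, giving $V\cap U_\xi\neq\emptyset$ and contradicting the minimality of $\xi_0$. Hence $\xi_0$ must be a successor, say $\xi_0=\eta+1$.

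Finally, by minimality of $\xi_0$ we have $V\cap U_\eta=\emptyset$, so the nonempty open set $W:=V\cap U_{\eta+1}$ is contained in $U_{\eta+1}\setminus U_\eta=P_{\eta+1}$. Being open in $X$ and contained in $P_{\eta+1}$, it lies in $\mathrm{int}\,P_{\eta+1}\subseteq U_{\mathscr P}$, contradicting $V\cap U_{\mathscr P}=\emptyset$. This completes the argument.

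There is no real obstacle here; the proof is a standard minimality trick on ordinals, and the only subtle ingredient is the use of condition~\ref{it:c} to exclude limit ordinals, which is precisely why regularity of the sequence was built into the definition.
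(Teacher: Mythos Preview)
Your proof is correct and follows essentially the same approach as the paper's: both fix a nonempty open set and use an ordinal minimality argument to locate a nonempty open subset inside some $P_\xi$. The paper argues directly by minimizing over $\{\xi: G\cap P_\xi\ne\emptyset\}$ (so the minimum is automatically a successor and $\bigcup_{\zeta\le\beta}P_\zeta=U_\beta$ is open), whereas you argue by contradiction and minimize over $\{\xi: V\cap U_\xi\ne\emptyset\}$, which requires the extra step of ruling out limit ordinals via condition~\ref{it:c}; this is a purely cosmetic difference.
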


\begin{proof}
  Fix open nonempty set $G\subseteq X$ and let $\beta=\min\{\xi: G\cap P_\xi\ne\emptyset\}$. Since the set $V=\bigcup_{\xi\le\beta}P_\xi$ is open in $X$, the set $V\cap G=P_\beta\cap G$ is open in $X$. Then $G\cap {\rm int}P_\xi\ne\emptyset$. Therefore, $\overline{U_{\mathscr P}}=X$.
\end{proof}

\begin{definition}
  {\rm A family $\mathscr B$ of subsets of a topological space $X$ is said to be {\it a base} for a map $f:X\to Y$, if for every open set $V\subseteq Y$ there exits a subfamily $\mathscr B_V$ of $\mathscr B$ such that $f^{-1}(V)=\bigcup_{B\in\mathscr B_V}B$.}
\end{definition}

\begin{lemma}\label{lem:continuity}
  Let $\mathscr B=\bigcup_{n\in\omega}\mathscr B_n$ be a base for a map $f:X\to Y$ between topological spaces and each family $\mathscr B_n$ consists of mutually disjoint sets. Then
  \begin{gather}\label{gath:3}
    \bigcap_{n\in\omega} U_{\mathscr B_n}\subseteq C(f),
  \end{gather}
  where $C(f)$ means the set of all continuity points of $f$ and the sets $U_{\mathscr B_n}$ are defined by (\ref{gath:2}).
\end{lemma}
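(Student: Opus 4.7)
The plan is to show that every $x \in \bigcap_n U_{\mathscr B_n}$ admits, for each open neighborhood of $f(x)$ in $Y$, an open neighborhood in $X$ whose image lies inside it. The combinatorial engine is the disjointness of each $\mathscr B_n$: it forces the single set of $\mathscr B_n$ containing $x$ to be uniquely determined.

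In detail, I would fix $x \in \bigcap_{n\in\omega} U_{\mathscr B_n}$ and an open set $V \subseteq Y$ with $f(x)\in V$. By the definition of base for $f$, there is a subfamily $\mathscr B_V \subseteq \mathscr B$ with $f^{-1}(V)=\bigcup_{B\in\mathscr B_V}B$, so some $B^\ast \in \mathscr B_V$ contains $x$. Since $\mathscr B=\bigcup_n \mathscr B_n$, pick $n^\ast$ with $B^\ast\in\mathscr B_{n^\ast}$. By the hypothesis $x\in U_{\mathscr B_{n^\ast}}$, there is $B'\in\mathscr B_{n^\ast}$ with $x\in {\rm int}\, B'$; disjointness of the members of $\mathscr B_{n^\ast}$ then forces $B'=B^\ast$, because $x$ lies in both. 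Hence ${\rm int}\, B^\ast$ is an open neighborhood of $x$ contained in $B^\ast\subseteq f^{-1}(V)$, which gives $f({\rm int}\, B^\ast)\subseteq V$ and establishes continuity of $f$ at $x$.

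I expect no serious obstacle: once the notation is unwound, the argument is a one-line application of disjointness together with the definition of a base for $f$. The only mildly subtle point is that $B^\ast$ may a priori belong to several families $\mathscr B_n$, but the argument needs only one index $n^\ast$ for which $B^\ast\in\mathscr B_{n^\ast}$, so the ambiguity is harmless. Accordingly, I would present the proof as a short three- or four-sentence verification along the lines above.
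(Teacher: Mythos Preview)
Your proposal is correct and follows essentially the same approach as the paper's own proof: fix $x$ in the intersection, pick $B\in\mathscr B_{n}$ with $x\in B\subseteq f^{-1}(V)$, and use $x\in U_{\mathscr B_{n}}$ together with disjointness of $\mathscr B_{n}$ to conclude $x\in{\rm int}\,B$. The only difference is cosmetic: the paper compresses the disjointness step into the single line ``$x\in B\cap U_{\mathscr B_n}$, consequently $x\in{\rm int}\,B$'', whereas you spell out the auxiliary set $B'$ and the identification $B'=B^\ast$ explicitly.
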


\begin{proof}
  Fix $x\in X$ such that $x\in U_{\mathscr B_n}$ for all $n\in\omega$ and let $V$ be an open neighborhood of $f(x)$ in $Y$. Since $\mathscr B$ is a base for $f$, there exists $n\in\omega$ and $B\in\mathscr B_n$ such that $x\in B\subseteq f^{-1}(V)$. Then $x\in B\cap U_{\mathscr B_n}$. Consequently, $x\in {\rm int} B\subseteq f^{-1}(V)$, which implies that $f$ is continuous at the point $x$.
\end{proof}

\begin{definition}
  {\rm  We say that a map $f:X\to Y$ between topological spaces  $X$ and $Y$ is {\it $\sigma$-regular}, if there exists a sequence $(\mathscr P_n)_{n\in\omega}$ of regular partitions of $X$ such that $\bigcup_{n\in\omega}\mathscr P_n$ is a base for $f$.}
\end{definition}

\begin{proposition}\label{prop:sigmareg_is_bc}
  If $X$ is a hereditarily Baire space and $Y$ is a topological space, then every $\sigma$-regular map $f:X\to Y$ is barely continuous.
\end{proposition}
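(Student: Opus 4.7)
The plan is to reduce barely continuity of $f$ on an arbitrary nonempty closed $F\subseteq X$ to an application of Lemmas~\ref{lem:dense_base} and \ref{lem:continuity} on $F$, exploiting the fact that $F$ inherits Baireness from $X$. Concretely, fix a nonempty closed set $F\subseteq X$ and let $(\mathscr P_n)_{n\in\omega}$ be the witnessing sequence of regular partitions of $X$ with $\mathscr B=\bigcup_{n\in\omega}\mathscr P_n$ a base for $f$. I would first show that the trace $\mathscr P_n|_F=\{P\cap F: P\in\mathscr P_n,\ P\cap F\neq\emptyset\}$ is a regular partition of $F$, by restricting the associated regular sequence $\mathscr U^n=(U^n_\xi:\xi\in[0,\alpha_n])$ to $F$, checking that $(U^n_\xi\cap F)$ is open in $F$, starts at $\emptyset$, ends at $F$, and is preserved under unions at limits (by distributivity), and then re-indexing to drop the repetitions caused by $U^n_{\xi+1}\cap F=U^n_\xi\cap F$ to recover strict increase.

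Next I would verify that $\mathscr B|_F=\bigcup_{n\in\omega}\mathscr P_n|_F$ is a base for the restriction $f|_F$: for an open $V\subseteq Y$, write $f^{-1}(V)=\bigcup_{B\in\mathscr B_V} B$ and intersect with $F$ to get $(f|_F)^{-1}(V)=\bigcup_{B\in\mathscr B_V}(B\cap F)$, where each nonempty $B\cap F$ lies in $\mathscr P_n|_F$ for the appropriate $n$. Since each $\mathscr P_n|_F$ consists of pairwise disjoint sets, Lemma~\ref{lem:continuity} applied to $f|_F$ yields
\begin{equation*}
  \bigcap_{n\in\omega} U_{\mathscr P_n|_F}\subseteq C(f|_F),
\end{equation*}
where each $U_{\mathscr P_n|_F}=\bigcup_{\xi}\operatorname{int}_F(P^n_\xi\cap F)$ is open in $F$, and by Lemma~\ref{lem:dense_base} it is dense in $F$.

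Finally, since $X$ is hereditarily Baire and $F$ is closed in $X$, $F$ is a nonempty Baire space. The countable intersection $\bigcap_{n\in\omega}U_{\mathscr P_n|_F}$ of dense open sets is therefore dense in $F$, and in particular nonempty. Hence $C(f|_F)\neq\emptyset$, so $f$ is barely continuous.

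The main technical point I expect to have to be careful about is the first step: confirming that restricting a regular partition of $X$ to the closed subspace $F$ produces a regular partition of $F$. The strict-inclusion clause and the limit-ordinal clause need to survive the restriction, which they do after discarding indices where the trace stabilizes; apart from this minor bookkeeping, the remaining steps are direct invocations of the two lemmas and of the hereditarily Baire hypothesis.
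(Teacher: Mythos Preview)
Your proposal is correct and follows essentially the same approach as the paper: restrict the regular partitions to the closed subspace $F$, apply Lemmas~\ref{lem:dense_base} and \ref{lem:continuity} on $F$, and use that $F$ is Baire. The paper's proof is terser---it simply asserts that combining the two lemmas with the Baireness of $F$ yields that $C(f|_F)$ is dense in $F$---but the underlying argument is identical, and your explicit verification that the traces $\mathscr P_n|_F$ are regular partitions of $F$ (after discarding stabilized indices) fills in precisely the bookkeeping the paper leaves implicit.
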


\begin{proof}
Let $F\subseteq X$ be a closed nonempty set. Notice that $F$ is a Baire space. This fact combining with Lemmas~\ref{lem:dense_base} and~\ref{lem:continuity} imply that the set $C(f|_F)$ is dense in $F$, which completes the proof.
\end{proof}

\begin{proposition}\label{prop:hsigma_is_regular}
 Let $X$ be a topological space and $Y$ be a second countable space. Then every $H_\sigma$-measurable map $f:X\to Y$ is $\sigma$-regular.
\end{proposition}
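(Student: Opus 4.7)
Fix a countable base $(V_m)_{m\in\omega}$ of $Y$. Since $f$ is $H_\sigma$-measurable, for each $m$ one can write
$$
f^{-1}(V_m)=\bigcup_{k\in\omega}A_{m,k}
$$
with every $A_{m,k}$ an $H$-set in $X$. The strategy is to attach to each pair $(m,k)$ a regular partition $\mathscr P_{m,k}$ of $X$ such that $A_{m,k}$ is a union of members of $\mathscr P_{m,k}$. After re-enumerating the countable family $(\mathscr P_{m,k})_{m,k\in\omega}$ as $(\mathscr P_n)_{n\in\omega}$, the union $\bigcup_{n\in\omega}\mathscr P_n$ will serve as a base for $f$: for an arbitrary open $V\subseteq Y$, writing $V=\bigcup_{m\in I}V_m$ with $I\subseteq\omega$, we obtain $f^{-1}(V)=\bigcup_{m\in I,\,k\in\omega}A_{m,k}$, which is manifestly a union of elements from $\bigcup_{n\in\omega}\mathscr P_n$.

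To produce $\mathscr P_{m,k}$, I invoke the Kuratowski characterization~(\ref{g:1})--(\ref{g:3}) of $H$-sets recalled above. Given an $H$-set $A=A_{m,k}$, choose a decreasing transfinite sequence $(F_\xi)_{\xi\in[0,\alpha)}$ of closed subsets of $X$ with $F_0=X$, with limit values given by~(\ref{g:3}), and with
$$
A=\bigcup_{\xi<\alpha,\,\xi\text{ odd}}(F_\xi\setminus F_{\xi+1}).
$$
Set $U_\xi=X\setminus F_\xi$ for $\xi\le\alpha$ and append $U_{\alpha+1}=X$. Since each $F_\xi$ is closed, each $U_\xi$ is open; the inclusions $U_\xi\subseteq U_{\xi+1}$ follow from the monotonicity of $(F_\xi)$; and, by De Morgan, the limit condition~(\ref{g:3}) translates into $U_\gamma=\bigcup_{\xi<\gamma}U_\xi$ for every limit $\gamma\le\alpha$, while the appended terminal value supplies $U_{\alpha+1}=X$. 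Thus $(U_\xi)_{\xi\in[0,\alpha+1]}$ is a regular sequence in $X$, and its associated regular partition $\mathscr P_{m,k}$ consists of the cells $P_{\xi+1}=F_\xi\setminus F_{\xi+1}$ for $\xi<\alpha$ together with the terminal cell $P_{\alpha+1}=F_\alpha$. By the very form of the decomposition, $A$ is precisely the union of the odd-indexed cells $P_{\xi+1}$ with $\xi<\alpha$, so $A$ is a union of members of $\mathscr P_{m,k}$, as required.

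The remainder of the argument is the routine verification sketched in the first paragraph. No deeper obstacle is involved; the only piece of bookkeeping worth attention is the possibility that the terminal intersection $F_\alpha=\bigcap_{\xi<\alpha}F_\xi$ may be non-empty, and this is precisely what the padding step $U_{\alpha+1}=X$ takes care of, ensuring that the regular sequence really ends at $X$ as demanded by the definition.
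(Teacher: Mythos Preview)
Your proof is correct and follows essentially the same route as the paper's: pull back a countable base, decompose each preimage into countably many $H$-sets, and use the Kuratowski resolution (\ref{g:1})--(\ref{g:3}) to attach a regular partition to each $H$-set so that it is a union of cells. Your version is in fact more careful than the paper's in two respects: you make the double indexing $(m,k)$ explicit (the paper tacitly collapses it into a single index), and you handle the possibility $F_\alpha\ne\emptyset$ by padding with $U_{\alpha+1}=X$, which the paper glosses over.
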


\begin{proof}
 Consider a countable base $(V_n)_{n\in\omega}$ of $Y$. Let $n\in\omega$ be fixed. Since $f^{-1}(V_n)$ is an $H_\sigma$-subset of $X$, the equalities (\ref{g:1})--(\ref{g:3}) imply that there exists a regular partition  $\mathscr P_n$ of $X$ such that $f^{-1}(V_n)=\bigcup_{P\in\mathscr P_n'}P$ for some $\mathscr P'\subseteq \mathscr P$. Hence, $f$ is $\sigma$-regular.
\end{proof}

\begin{theorem}\label{thm:F_sigma_is_bc}
  Let $X$ be a perfectly normal space, $(Y_n:n\in\omega)$ be a covering of $Y$ by closed Polish spaces and $f:X\to Y$. Consider the following conditions:
  \begin{enumerate}
    \item[(a)] $f$ is $F_\sigma$-measurable,

    \item[(b)] $f$ is barely continuous.
  \end{enumerate}
  If $X$ is hereditarily Baire, then (a) $\Rightarrow$ (b). If $X$ is paracompact, then (b) $\Rightarrow$ (a).
\end{theorem}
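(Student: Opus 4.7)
The proof splits along the two implications, and I treat the easier direction first.

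For (b)$\Rightarrow$(a), the crucial observation is that $Y$ is perfect. Given an open $V\subseteq Y$, write $V=\bigcup_{n\in\omega}(V\cap Y_n)$; each $V\cap Y_n$ is open in the Polish (hence perfectly normal) space $Y_n$, so it is $F_\sigma$ in $Y_n$, and since $Y_n$ is closed in $Y$, it is also $F_\sigma$ in $Y$. Hence $V$ is $F_\sigma$ in $Y$, so $Y$ is perfect. Since $X$ is perfectly normal and paracompact, it is perfect paracompact, and Corollary~\ref{cor:3} immediately gives that the barely continuous map $f$ is $F_\sigma$-measurable.

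For (a)$\Rightarrow$(b), I must show that every nonempty closed $F\subseteq X$ carries a continuity point of $f|_F$. Such $F$ is hereditarily Baire. Because each $Y_n$ is closed in $Y$ and $f$ is $F_\sigma$-measurable, the preimages $f^{-1}(Y_n)$ are $G_\delta$ in $X$, so the sets $F_n:=F\cap f^{-1}(Y_n)$ are $G_\delta$ in $F$ and cover $F$. A Baire category argument in $F$ then produces an index $n_0$ and a nonempty open set $U\subseteq F$ on which $F_{n_0}$ is dense; passing to $F':=\overline{U}^{F}$, which is closed in $X$ and hence hereditarily Baire, one obtains a dense $G_\delta$ subset $V:=F'\cap f^{-1}(Y_{n_0})$ of $F'$ whose image lies in the Polish space $Y_{n_0}$.

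To extract a continuity point of $f|_{F'}$, I plan to iterate the Baire argument using a complete metric $d$ on $Y_{n_0}$. At each scale $2^{-k}$, cover $Y_{n_0}$ by countably many open $d$-balls of diameter less than $2^{-k}$, so that their preimages form a countable $F_\sigma$-cover of the Baire space $V$. Applying Baire repeatedly, I construct a nested sequence of nonempty relatively open sets $O_0\supseteq O_1\supseteq\cdots$ in $V$ and nested balls $B_k\subseteq Y_{n_0}$ with $\mathrm{diam}_d(B_k)<2^{-k}$ such that $V\cap f^{-1}(B_k)$ is dense in $O_k$; completeness of $d$ forces $\bigcap_k\overline{B_k}=\{y_\infty\}$ for some $y_\infty\in Y_{n_0}$. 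The main obstacle is to show that $\bigcap_k O_k$ contains a point $x_0$ at which $f|_F$ (and not merely $f|_V$) is continuous, with $f(x_0)=y_\infty$. Nonemptiness of this intersection will require exploiting hereditary Baireness of $F'$ together with a judicious choice of the witnessing open sets so that their closures in $F'$ also nest, while reconciling the ``good'' behavior of $f$ on the dense $G_\delta$-set $V$ with its behavior on the residual meager $F_\sigma$-set $F'\setminus V$, using the first countability of $Y_{n_0}$ at $y_\infty$, is the delicate final step.
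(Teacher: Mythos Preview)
Your (b)$\Rightarrow$(a) argument is correct and coincides with the paper's.

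For (a)$\Rightarrow$(b), the gap you flag at the end is genuine, and your plan does not close it. Suppose you succeed in locating $V=F'\cap f^{-1}(Y_{n_0})$ dense $G_\delta$ in $F'$ and a point $x_0\in V$ at which $f|_V$ is continuous with value $y_\infty$. You still need continuity of $f|_{F'}$ at $x_0$. But by construction $f$ sends $F'\setminus V$ into $Y\setminus Y_{n_0}$, and nothing in your argument constrains how close those values are to $y_\infty$ in $Y$. First countability of $Y_{n_0}$ is a red herring: the relevant neighborhoods are neighborhoods of $y_\infty$ in $Y$, and points of $F'\setminus V$ can map arbitrarily far from $y_\infty$ no matter how small a domain neighborhood you choose. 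There is also a second problem with the nested-open-sets scheme itself: in a merely hereditarily Baire space (with no completeness in $X$), a nested sequence $O_0\supseteq O_1\supseteq\cdots$ of nonempty open sets can have empty intersection even if closures nest; the Baire property gives you nonempty intersections of dense $G_\delta$'s, not of arbitrary decreasing open sets.

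The paper sidesteps both obstacles by a different mechanism. Instead of fixing one $n_0$ and working on the dense $G_\delta$ piece $V$, it \emph{extends} each restriction $f|_{X_n}$ (where $X_n=f^{-1}(Y_n)$ is $G_\delta$ in the perfectly normal space $X$) to an $F_\sigma$-measurable map $g_n:X\to Y_n$ defined on \emph{all} of $X$, invoking an extension theorem for $G_\delta$ subsets of perfectly normal spaces. Since each $Y_n$ is Polish, hence second countable, each $g_n$ is $\sigma$-regular (Proposition~\ref{prop:hsigma_is_regular}); the countably many regular partitions coming from all the $g_n$ jointly form a base for $f$, so $f$ itself is $\sigma$-regular. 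Proposition~\ref{prop:sigmareg_is_bc} then yields bare continuity directly: on any closed $F$, the continuity set of $f|_F$ contains a countable intersection of dense open sets, nonempty since $F$ is Baire. The extension step is exactly the missing idea---it lets all the $Y_n$'s be handled simultaneously and converts your ``nested open sets'' picture into a ``dense $G_\delta$'' one, where hereditary Baireness suffices.
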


\begin{proof} (a) $\Rightarrow$ (b). Let $X$ be a hereditarily Baire space. Since $f$ is $F_\sigma$-measurable, every set $X_n=f^{-1}(Y_n)$ is $G_\delta$ in $X$. Then $X_n$ is 1-embedded and well 1-embedded in $X$. Hence, \cite[Theorem 7.3]{Karlova:CMUC:2013} implies that for every $n\in\omega$ there exists an $F_\sigma$-measurable map $g_n:X\to Y_n$ such that $g_n|_{X_n}=f|_{X_n}$.

Clearly, every $g_n:X\to Y_n$ is $H_\sigma$-measurable. Proposition~\ref{prop:hsigma_is_regular} implies that every $g_n$ is a $\sigma$-regular map.

Let $V$ be an open subset of $Y$. The equalities
$$
f^{-1}(V)=\bigcup_{n\in\omega} f^{-1}(V\cap Y_n)=\bigcup_{n\in\omega} g_n^{-1}(V\cap Y_n)
$$
imply that $f:X\to Y$ is $\sigma$-regular.

Now Proposition~\ref{prop:sigmareg_is_bc} implies that $f$ is barely continuous.

(b) $\Rightarrow$ (a). Assume that $X$ is a perfect paracompact space. Since $Y$ is perfect as a union of countably many closed perfect spaces, $f$ is $F_\sigma$-measurable by Corollary~\ref{cor:3}.
\end{proof}

\begin{definition}
 {\rm A topological space $Y$ is called  {\it an adhesive for a topological space $X$} (this fact is denoted as $Y\in {\rm Ad}(X)$), if for any disjoint zero-sets  $A$ and $B$ of $X$ and any continuous maps $f,g:X\to Y$ there exists a continuous map $h:X\to Y$ such that $h|_A=f|_A$ and $h|_B=g|_B$.}
\end{definition}
 It is worth noting that every topological space $Y$ is an adhesive for every strongly zero-dimensional space; a path-connected space  $Y$ is an adhesive for any compact space $X$ in which every point has a base of neighborhoods with discrete boundaries;  $Y$ is an adhesive for an arbitrary topological space  $X$ if and only of $Y$ is contractible (see \cite[Theorem 2.7]{Karlova:Mykhaylyuk:Stable}).

\begin{lemma}\label{lem:adh}
  Let a topological space $Y$ be an adhesive for a topological space $X$,  $\alpha\in [0,\omega_1)$, $(X_n:n\in\omega)$ be a partition of $X$ by functionally ambiguous sets of the $\alpha$'th class and $f_n\in {\rm B}_\alpha(X,Y)$ for all $n\in\omega$.  Then the mapping $f:X\to Y$ defined by
  $$
  f|_{X_n}=f_n|_{X_n}\quad {\rm for}\,\,\,{\rm all}\,\,\,n\in\omega,
  $$
  belongs to  ${\rm B}_\alpha(X,Y)$.
\end{lemma}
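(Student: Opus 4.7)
The plan is to argue by transfinite induction on $\alpha\in[0,\omega_1)$. In the base case $\alpha=0$ the adhesive hypothesis is not even needed: a functionally ambiguous set of class $0$ is simultaneously a zero set and a cozero set of continuous real-valued functions, hence clopen in $X$, so $(X_n)$ is a countable clopen partition. Then for any open $V\subseteq Y$,
\[
f^{-1}(V)=\bigcup_{n\in\omega}(f_n|_{X_n})^{-1}(V)
\]
is a union of sets open in the open pieces $X_n$, hence open in $X$, and $f$ is continuous.

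For $\alpha\ge 1$ I would assume the result for every $\beta<\alpha$ and fix, for each $n$, a sequence $(f_{n,k})_{k\in\omega}$ with $f_{n,k}\in{\rm B}_{\gamma_{n,k}}(X,Y)$, $\gamma_{n,k}<\alpha$, converging pointwise to $f_n$. The idea is to manufacture ``partial pastes'' $g_k\to f$ pointwise, with every $g_k$ in a Baire class strictly below $\alpha$. To this end I would refine $(X_n)$ by using the standard description of a functionally ambiguous set of class $\alpha$ as a countable union of functionally multiplicative sets of lower class: write $X_n=\bigcup_j Z_{n,j}$ and distill a single countable partition $(E_{n,j})_{(n,j)\in\omega\times\omega}$ of $X$ by functionally ambiguous sets of classes $<\alpha$ with $E_{n,j}\subseteq X_n$. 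For each $k$ define
\[
g_k(x)=f_{n,k}(x)\quad\text{whenever } x\in E_{n,j}\text{ with } (n,j)\le k,
\]
and extend $g_k$ to the remaining pieces by a canonical default (say $f_{0,k}$). Applying the inductive hypothesis to this refined partition together with the class-$<\alpha$ maps $f_{n,k}$ places $g_k$ in ${\rm B}_{\beta_k}(X,Y)$ for some $\beta_k<\alpha$; pointwise convergence $g_k(x)\to f(x)$ is immediate because any fixed $x\in X_n$ eventually lies in some $E_{n,j}$ with $(n,j)\le k$, at which point $g_k(x)=f_{n,k}(x)\to f_n(x)=f(x)$. Hence $f\in{\rm B}_\alpha(X,Y)$.

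The principal obstacle is the mismatch between the class of the given partition $(X_n)$ (ambiguous of class $\alpha$) and the class needed by the inductive pasting step (ambiguous of class $<\alpha$): one must carry out the refinement so that the pieces $E_{n,j}$ really do drop a level in the hierarchy. A secondary technical issue is the limit-ordinal case, where the ranks $\gamma_{n,k}$ accumulate to $\alpha$ and one has to diagonalise to keep each $g_k$ in a single class $\beta_k<\alpha$. The adhesive hypothesis itself only enters at the bottom of the induction, via pairwise pastes of continuous maps on disjoint zero sets; its role is precisely to make sure, step by step, that each pasted map actually lands in $Y$ (and not only in some enveloping space), so that the Baire rank of $g_k$ does not drift above $\alpha$ as the construction proceeds.
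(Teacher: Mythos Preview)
Your overall strategy---transfinite induction, refining the ambiguous partition to pieces of strictly lower class, and pasting approximants via the inductive hypothesis---is the paper's strategy. But there is a genuine gap at $\alpha=1$, and your closing paragraph gestures at it without resolving it.

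When $\alpha=1$, your scheme asks you to refine the partition $(X_n)$ by functionally ambiguous sets of class~$1$ into a partition $(E_{n,j})$ by functionally ambiguous sets of class~$0$, i.e.\ by \emph{clopen} sets, and then invoke the $\alpha=0$ case. That refinement is impossible in general: if $X$ is connected, the only clopen sets are $\emptyset$ and $X$, yet class-$1$ ambiguous sets abound. So the inductive step fails to go through at this level, and since you correctly note that the adhesive hypothesis is \emph{not} used at $\alpha=0$, in your scheme as written it never gets used at all. Saying that it ``enters at the bottom'' is not a proof until you say exactly how.

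The paper handles this by treating $\alpha=1$ as a second base case. One writes each $X_n$ as an increasing union $\bigcup_m X_{n,m}$ of \emph{zero sets} (not clopen sets); for fixed $m$ the finitely many pairwise disjoint zero sets $X_{0,m},\dots,X_{m,m}$ carry the continuous maps $f_{0,m},\dots,f_{m,m}$, and this is precisely the configuration to which $Y\in{\rm Ad}(X)$ applies: iterated use of the adhesive property yields a single continuous map $g_m:X\to Y$ agreeing with each $f_{n,m}$ on $X_{n,m}$, and $g_m\to f$ pointwise. Only once this case is in hand does the induction for $\alpha\ge 2$ proceed as you describe: there the paper invokes the Reduction Theorem to produce, for each $m$, a \emph{finite} partition $(A_{n,m}:n\le m)$ of $X$ by functionally ambiguous sets of classes $<\alpha$ with $X_{n,m}\subseteq A_{n,m}$, and at that stage the inductive hypothesis (rather than the adhesive directly) does the pasting.
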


\begin{proof}
We will argue by the induction. Let $\alpha=0$. Then every set $X_n$ is clopen in $X$ and every map $f_n$ is continuous on $X$. Clearly, the map $f$ is continuous. Let us observe that we did not use the inclusion $Y\in {\rm Ad}(X)$.

Now let $\alpha =1$. We take a sequence $(f_{n,m})_{m\in\omega}$ of continuous maps $f_{n,m}:X\to Y$ such that $\lim_{m\to\infty} f_{n,m}(x)=f_n(x)$ for all $x\in X$ and $n\in\omega$. For every $n\in\omega$ we choose an increasing sequence $(X_{n,m})_{m\in\omega}$ of functionally closed sets $X_{n,m}$ such that $X_n=\bigcup_{m\in\omega} X_{n,m}$. Since $Y\in {\rm Ad}(X)$, for every $m\in\omega$ there exists a continuous function $f_m:X\to Y$ such that
$$
f_m|_{X_{n,m}}=f_{n,m}|_{X_{n,m}}
$$
for all $n\le m$.  It is easy to see that the sequence $(f_m)_{m\in\omega}$ converges to $f$ on $X$ pointwisely. Thus, $f\in {\rm B}_1(X,Y)$.

Suppose that for some $\alpha\in[2,\omega_1)$ the statement of Lemma is true for all \mbox{$\beta\in[0,\alpha)$}. We take a sequence $(f_{n,m})_{m\in\omega}$ of maps $f_{n,m}\in{\rm B}_{\alpha_n}(X,Y)$ such that $\alpha_n<\alpha$ and $\lim_{m\to\infty} f_{n,m}(x)=f_n(x)$ for all $x\in X$ and $n\in\omega$.
For every $n\in\omega$ we choose an increasing sequence $(X_{n,m})_{m\in\omega}$ of sets of the $\beta_{n,m}$'th functionally multiplicative  class such that $X_n=\bigcup_{m\in\omega} X_{n,m}$ and $\beta_{n,m}<\alpha$. Following the proof of Theorem 2 from \cite[p.~350]{Kuratowski:Top:1} it is not hard to show that  for every $m$ there exists a partition $(A_{n,m}:n\le m)$ of $X$ by functionally ambiguous sets of the classes $<\alpha$ such that $X_{n,m}\subseteq A_{n,m}$.

 By the inductive assumption, for every $m$ there exists a map $f_m\in {\rm B}_{<\alpha}(X,Y)$ such that $f_m|_{A_{n,m}}=f_{n,m}|_{A_{n,m}}$  for all $n\le m$. It is easy to see that the sequence $(f_m)_{m\in\omega}$ converges to $f$ on $X$ pointwisely.  Then $f\in{\rm B}_\alpha(X,Y)$.
\end{proof}

\begin{definition}
  {\rm Let $\alpha\in[0,\omega_1)$. A topological space $Y$ is called {\it (weakly) $B_\alpha$-favorable for $X$}, if ${\rm H}_{\alpha^+}(X,Y)\subseteq {\rm B}_\alpha(X,Y)$ (${\rm K}_{\alpha^+}(X,Y)\subseteq {\rm B}_\alpha(X,Y)$).}
\end{definition}

\begin{theorem}\label{thm:b1=h1}
  Let $X$ be a topological space, \mbox{$\mathscr Y=(Y_n:n\in\omega)$} be a sequentially absorbing covering of a topological space $Y\in{\rm Ad}(X)$ by ${\rm B}_1$-favorable spaces for $X$ and $f:X\to Y$ be a map. If
\begin{enumerate}
  \item[1)] $X$ is a hereditarily Baire  metrizable separable space, $\mathscr Y$ is super sequentially absorbing and $f\in{\rm H}_1(X,Y)$, or

  \item[2)] $X$ is a first countable perfect paracompact space and $f$ is barely continuous,
 \end{enumerate}
 then $f\in{\rm B}_1(X,Y)$.
\end{theorem}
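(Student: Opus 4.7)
The plan is to reduce verifying $f\in {\rm B}_1(X,Y)$ to checking countably many auxiliary maps into the $B_1$-favorable pieces $Y_n$, and then to glue these via the adhesive structure of $Y$ using Lemma~\ref{lem:adh}. In both cases~1) and 2) the hypotheses on $X$, $f$ and $\mathscr Y$ match exactly one of the two parts of Proposition~\ref{prop:aux_sigma_metr}, which furnishes a covering $(C_n:n\in\omega)$ of $X$ by $F_\sigma$-subsets with $f(C_n)\subseteq Y_n$ for every $n$.

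Next I would refine this cover to a partition of $X$ by functionally ambiguous sets of the first class, so that Lemma~\ref{lem:adh} can eventually be applied with $\alpha=1$. Observe that in both cases $X$ is perfectly normal --- metrizable in case~1), perfect paracompact Hausdorff in case~2) --- hence every closed subset of $X$ is a zero-set. Writing each $C_n$ as an increasing union of closed sets and arranging all these closed sets into a single sequence whose partial unions $Z_0\subseteq Z_1\subseteq\dots$ are zero-sets exhausting $X$, set $X_0=Z_0$ and $X_k=Z_k\setminus Z_{k-1}$ for $k\ge 1$. Each $X_k$ is the intersection of a zero-set with a cozero-set, hence both functionally $F_\sigma$ and functionally $G_\delta$, i.e.\ functionally ambiguous of the first class; moreover the construction produces a map $k\mapsto n(k)$ with $f(X_k)\subseteq Y_{n(k)}$.

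For every $k$ I fix a point $y_k\in Y_{n(k)}$ and define
\[
g_k:X\to Y_{n(k)},\qquad g_k(x)=\begin{cases} f(x), & x\in X_k,\\ y_k, & x\in X\setminus X_k.\end{cases}
\]
For any open $V\subseteq Y_{n(k)}$, writing $V=V'\cap Y_{n(k)}$ for some open $V'\subseteq Y$, the preimage $g_k^{-1}(V)$ is a Boolean combination of the sets $X_k$, $X\setminus X_k$ and $f^{-1}(V')\cap X_k$, each of which is an $F_\sigma$-subset of $X$: in case~1) immediately from $f\in{\rm H}_1(X,Y)$, and in case~2) by applying Corollary~\ref{cor:3} to the restriction $f|_F:F\to Y_{n(k)}$ on each closed piece $F$ of the $F_\sigma$-decomposition of $X_k$ (which is closed in the perfect paracompact space $X$). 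Hence $g_k\in{\rm H}_1(X,Y_{n(k)})$, and the $B_1$-favorability of $Y_{n(k)}$ for $X$ promotes this to $g_k\in{\rm B}_1(X,Y_{n(k)})\subseteq{\rm B}_1(X,Y)$. Since $Y\in{\rm Ad}(X)$, $g_k|_{X_k}=f|_{X_k}$, and $(X_k)$ is a partition of $X$ by functionally ambiguous sets of the first class, Lemma~\ref{lem:adh} with $\alpha=1$ finally gives $f\in{\rm B}_1(X,Y)$.

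The point I expect to require the most care is the case~2) verification that $g_k\in{\rm H}_1(X,Y_{n(k)})$. Only bare continuity of $f$ is directly given, and passing to $F_\sigma$-preimages of open subsets of $Y_{n(k)}$ through Corollary~\ref{cor:3} hinges on some regularity (essentially perfectness) of the pieces $Y_{n(k)}$, which should be either a tacit consequence of the $B_1$-favorability hypothesis or inherited from the specific structure (e.g.\ metrizability) the $Y_n$'s carry in the intended applications; isolating this regularity cleanly is the technical heart of the argument.
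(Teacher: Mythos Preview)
Your argument follows the paper's route exactly: Proposition~\ref{prop:aux_sigma_metr} yields the $F_\sigma$-covering, one refines to a partition by ambiguous sets of the first class, defines auxiliary maps into the pieces $Y_n$, uses $B_1$-favorability to lift them into ${\rm B}_1$, and glues via Lemma~\ref{lem:adh}. Two minor differences: the paper uses the Reduction Theorem from \cite[p.~350]{Kuratowski:Top:1} to pass from the covering $(C_n)$ to the partition $(X_n)$ rather than building it by hand; and, more to the point of your closing concern, in case~2) the paper first applies Corollary~\ref{cor:3} \emph{globally} to conclude $f\in{\rm H}_1(X,Y)$ at the outset, after which cases~1) and~2) are handled uniformly and the verification $f_n\in{\rm H}_1(X,Y_n)$ is immediate from $f\in{\rm H}_1(X,Y)$ together with the ambiguity of $X_n$ --- no piecewise application of Corollary~\ref{cor:3} to restrictions is needed. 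Your instinct about perfectness is correct and applies to the paper as well: the global use of Corollary~\ref{cor:3} tacitly assumes $Y$ is perfect, which follows once each closed $Y_n$ is perfect.
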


\begin{proof}
   Let us observe that in the second case $f\in{\rm H}_1(X,Y)$ by Corollary~\ref{cor:3}. Applying Proposition~\ref{prop:aux_sigma_metr} we obtain that there exists a covering $(C_n:n\in\omega)$ of $X$ by $F_\sigma$-sets such that $f(C_n)\subseteq Y_n$ for all $n\in\omega$. Then the Reduction Theorem~\cite[p.~350]{Kuratowski:Top:1} implies that there exists a partition $(X_n:n\in\omega)$ of the space $X$ by $F_\sigma$- and $G_\delta$-sets such that $X_n\subseteq C_n$ for all $n\in\omega$. Take any $y_n\in Y_n$ for every $n\in\omega$ and put
   $$
   f_n(x)=\left\{\begin{array}{ll}
                   f(x), & x\in X_n, \\
                   y_n, & x\not\in X_n.
                 \end{array}
   \right.
   $$
   Then $f_n\in{\rm H}_1(X,Y_n)$, since the set $X_n$ is ambiguous. Now $f_n\in{\rm H}_1(X,Y_n)$, because $Y_n$ is ${\rm B}_1$-favorable for $X$. Lemma~\ref{lem:adh} implies that $f\in{\rm B}_1(X,Y)$.
\end{proof}

\section{Extension results}\label{sec:44}

\begin{definition}
  {\rm Let $\mathcal P$ be a property of maps, $\alpha\in[0,\omega_1)$, $X$, $Y$ be topological spaces and $E\subseteq X$. A triple $(X,E,Y)$ has {\it the $\mathcal P$-extension property}, if any map $f:E\to Y$ with the property $\mathcal P$ can be extended to a map $g:X\to Y$ with the property $\mathcal P$.}
\end{definition}

\begin{definition}
  {\rm A subspace $E$ of a topological space $X$ is called {\it $K_\alpha$-embedded in $X$}, if every function $f\in{\rm K}_\alpha(E,\mathbb R)$ can be extended to a function $f\in{\rm K}_\alpha(X,\mathbb R)$.}
\end{definition}

It is known~\cite{KaSp, Karlova:CMUC:2013}  that $E$ is ${\rm K}_\alpha$-embedded in a topological space $X$ in each of the following cases:
\begin{itemize}
\item $E$ is a co-zero subset of $X$,

\item $X$ is completely regular, $E$ is  Lindel\"{o}f and $G_\delta$,

\item $X$ is metrizable, $E$ is $G_\delta$,

\item $X$ is completely regular, $E$ is hereditarily Baire Lindel\"{o}f.
\end{itemize}

\begin{theorem}\label{thm:Ext:Balpha_sigmaMetr}
Let $\alpha\in [1,\omega_1)$, $X$ be a topological space, $E$ be a $K_\alpha$-embedded subspace of  $X$, $(Y_n:n\in\omega)$ be a sequentially absorbing covering of a topological space $Y\in {\rm Ad}(X)$ by Polish weakly $B_\alpha$-favorable spaces for $X$. Then $(X,E,Y)$ has the $B_\alpha$-extension property.
\end{theorem}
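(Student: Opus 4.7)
The strategy will be to decompose $f$ along the covering $(Y_n)$, extend each piece separately into the Polish space $Y_n$, and glue the extensions along a partition of $X$ using the adhesive property of $Y$.

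First, Proposition~\ref{prop:ext:Balpha} applied to $f \in {\rm B}_\alpha(E, Y)$ yields a partition $(E_n : n \in \omega)$ of $E$ by functionally ambiguous sets of class $\alpha^+$ such that $f(E_n) \subseteq Y_n$ and $f|_{E_n} \in {\rm B}_\alpha(E_n, Y_n)$ for every $n$. Fixing $y_n \in Y_n$, I define $\varphi_n : E \to Y_n$ by $\varphi_n(x) = f(x)$ on $E_n$ and $\varphi_n(x) = y_n$ elsewhere. Since $E_n$ is functionally ambiguous of class $\alpha^+$ in $E$ and $Y_n$ is metrizable separable, a direct check shows that $\varphi_n \in {\rm K}_{\alpha^+}(E, Y_n)$.

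Second, I extend each $\varphi_n$ to a $K_{\alpha^+}$-measurable map $g_n : X \to Y_n$. Because $Y_n$ is Polish, a countable base $(V_{n,k})_{k\in\omega}$ of $Y_n$ is available; the $K_\alpha$-embedding of $E$ in $X$ allows one to extend real-valued functions whose sub- and super-level sets code the preimages $\varphi_n^{-1}(V_{n,k})$ as functionally ambiguous sets of class $\alpha^+$ in $X$, and a standard reassembly (along the lines of Theorem~7.3 in \cite{Karlova:CMUC:2013}) produces a genuine $Y_n$-valued extension $g_n \in {\rm K}_{\alpha^+}(X, Y_n)$. Weak $B_\alpha$-favorability of $Y_n$ for $X$ then upgrades $g_n$ to ${\rm B}_\alpha(X, Y_n) \subseteq {\rm B}_\alpha(X, Y)$.

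Third, using the same $K_\alpha$-embedding, I lift each $E_n$ to a functionally ambiguous set $\widetilde E_n$ of class $\alpha^+$ in $X$ with $\widetilde E_n \cap E = E_n$, and apply the Reduction Theorem~\cite[p.~350]{Kuratowski:Top:1} to turn the cover $(\widetilde E_n)$ into a partition $(X_n : n \in \omega)$ of $X$ by functionally ambiguous sets of class $\alpha^+$ with $E_n \subseteq X_n$. Since $Y \in {\rm Ad}(X)$, Lemma~\ref{lem:adh} glues the maps $g_n$ along this partition to a single $g \in {\rm B}_\alpha(X, Y)$ with $g|_{X_n} = g_n|_{X_n}$, and because $g_n|_{E_n} = f|_{E_n}$ we conclude $g|_E = f$.

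The main obstacle will be the second step: ensuring that the $K_\alpha$-embedding of $E$ is strong enough to produce a $Y_n$-valued extension (rather than only an extension into a compactification of $Y_n$) while keeping the measurability class at $K_{\alpha^+}$. This is precisely the content of the Polish-valued extension machinery of \cite{Karlova:CMUC:2013}, and it requires careful use of a countable base of $Y_n$ together with a complete metric on $Y_n$. A smaller bookkeeping issue, for $\alpha \geq \omega$, is matching the class index $\alpha$ in Lemma~\ref{lem:adh} with the class $\alpha^+$ coming from Proposition~\ref{prop:ext:Balpha}; this causes no harm since the partition sets remain ambiguous in a class consistent with the $B_\alpha$-gluing once the component maps $g_n$ lie in ${\rm B}_\alpha(X, Y)$.
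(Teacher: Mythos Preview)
Your proposal is correct and follows essentially the same route as the paper: decompose $f$ via Proposition~\ref{prop:ext:Balpha}, extend each $Y_n$-valued piece through $E\hookrightarrow X$ using the Polish-valued extension result of \cite[Theorem~7.3]{Karlova:CMUC:2013}, upgrade by weak $B_\alpha$-favorability, and glue with Lemma~\ref{lem:adh}. The only cosmetic difference is that the paper obtains the partition $(B_n)$ of $X$ in one stroke from \cite[Proposition~7.2]{Karlova:CMUC:2013} (which already guarantees $B_n\cap E=E_n$ and $\bigcup_n B_n=X$), whereas you lift each $E_n$ separately and then reduce; just make sure your lifts $(\widetilde E_n)$ actually cover $X$ before invoking the Reduction Theorem, e.g.\ by enlarging one of them.
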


\begin{proof} Let $f\in {\rm B}_\alpha(E,Y)$.  According to Proposition~\ref{prop:ext:Balpha} there exists a partition $(C_n:n\in\omega)$ of $E$ by functionally ambiguous sets of the class  $\alpha^+$ in $E$ such that $f|_{C_n}\in{\rm B}_\alpha(C_n,Y_n)$ for all $n\in\omega$. It follows from \cite[Proposition 7.2]{Karlova:CMUC:2013} that there exists a partition $(B_n:\in\omega)$ of $X$ by functionally ambiguous sets of the $\alpha^+$'th class  such that $C_n=B_n\cap E$ for all $n\in\omega$. Now fix $y_n\in Y_n$ and put
\begin{gather*}
  f_n(x)=\left\{\begin{array}{ll}
                  f(x), & x\in C_n, \\
                  y_n, & x\in E\setminus C_n
                \end{array}
  \right.
\end{gather*}
for all $n\in\omega$. Since  $f\in {\rm K}_{\alpha^+}(E,Y)$, $f_n\in {\rm K}_{\alpha^+}(E,Y_n)$. Theorem~7.3 from~\cite{Karlova:CMUC:2013} implies that there exists an extension $g_n\in {\rm K}_{\alpha^+}(X,Y_n)$ of $f_n$ for every $n\in\omega$.  Since $Y_n$ is weakly ${\rm B}_\alpha$-favorable for $X$, $g_n\in {\rm B}_\alpha(X,Y_n)$ for all $n\in\omega$. It follows from Lemma~\ref{lem:adh} that the mapping $g:X\to Y$ defined by
$$g|_{B_n}=g_n|_{B_n}\quad{\rm for}\,\,\,{\rm all}\,\,\,\,n\in\omega$$
belongs to ${\rm B}_\alpha(X,Y)$. Clearly, $g$ is an extension of $f$.
\end{proof}

\begin{lemma}\label{lem:separable_image}
Let  $X$ be a hereditarily Baire metrizable separable space, $Y$ be a metrizable space and  $f:X\to Y$ be a barely continuous map. Then the space $f(X)$ is separable.
\end{lemma}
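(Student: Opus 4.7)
My plan is to build, for each $n\in\mathbb{N}$, a countable $(1/n)$-dense subset of $f(X)$ by transfinite recursion exploiting bare continuity, and then take the union over $n$ to obtain a countable dense subset. Fix a metric $d$ on $Y$.

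The recursion will produce closed sets $F_\alpha\subseteq X$, points $x_\alpha\in F_\alpha$, and open sets $W_\alpha\subseteq X$ as follows. Put $F_0=X$. Given $F_\alpha$ nonempty, use bare continuity to pick a point $x_\alpha$ of continuity of $f|_{F_\alpha}$; then continuity at $x_\alpha$ yields an open $W_\alpha\subseteq X$ with $x_\alpha\in W_\alpha$ and $f(W_\alpha\cap F_\alpha)\subseteq B(f(x_\alpha),1/n)$. Set $F_{\alpha+1}=F_\alpha\setminus W_\alpha$ and $F_\lambda=\bigcap_{\alpha<\lambda}F_\alpha$ at limit ordinals. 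Since $x_\alpha\in F_\alpha\setminus F_{\alpha+1}$, the sequence is strictly decreasing while nonempty, so the open complements $X\setminus F_\alpha$ form a strictly increasing transfinite chain of open sets. Because $X$ is second countable, this chain has countable length, and hence there is a countable ordinal $\beta<\omega_1$ with $F_\beta=\emptyset$.

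Next I would verify that every $x\in X$ lies in some $F_\gamma\cap W_\gamma$ with $\gamma<\beta$. Let $\alpha(x)=\min\{\alpha:x\notin F_\alpha\}$. This minimum is a successor: if it were a limit $\lambda$, then $x\in F_\eta$ for all $\eta<\lambda$ would give $x\in F_\lambda$, a contradiction. Writing $\alpha(x)=\gamma+1$, we get $x\in F_\gamma\setminus F_{\gamma+1}=F_\gamma\cap W_\gamma$, and hence $d(f(x),f(x_\gamma))<1/n$. So $D_n:=\{f(x_\alpha):\alpha<\beta\}$ is countable and $(1/n)$-dense in $f(X)$. Then $D=\bigcup_{n\in\mathbb{N}}D_n$ is a countable dense subset of $f(X)$, so $f(X)$ is separable.

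The only points that require care are the termination argument (which relies on second countability of $X$ to bound the length of the strictly increasing chain by a countable ordinal) and the successor-versus-limit dichotomy that pins each $x$ inside some $F_\gamma\cap W_\gamma$; neither uses hereditary Baireness in its full strength, but the hypothesis is already built into the bare continuity assumption, and the metrizability of $Y$ is what lets us witness continuity at $x_\alpha$ by a single open set mapping into a $1/n$-ball. I do not anticipate a serious obstacle here; the argument is a fairly clean transfinite recursion.
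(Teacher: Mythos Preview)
Your proof is correct. The transfinite recursion terminates because a strictly increasing chain of open sets in a second countable space must be countable, and the successor/limit dichotomy cleanly pins every $x$ inside some $F_\gamma\cap W_\gamma$. The argument is self-contained and uses only bare continuity of $f$, second countability of $X$, and metrizability of $Y$; as you observe, hereditary Baireness is not actually needed once bare continuity is assumed outright.

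The paper proceeds differently. It first shows a stronger intermediate fact: for every $G_\delta$-set $A\subseteq X$, the restriction $f|_A$ has a dense set of continuity points. This is where hereditary Baireness enters --- one argues that $C(f|_{\overline{A}})$ is a dense $G_\delta$ in $\overline{A}$, hence $D(f|_A)$ is first category in $A$, and since $A$ is Baire, $C(f|_A)$ is dense. With this in hand, the paper simply invokes the classical result from Kuratowski's \emph{Topology}~I (\S31, p.~398) that a map on a separable metrizable space whose restriction to every $G_\delta$-set has a continuity point has separable image. So the paper reduces to a black-box citation, whereas your argument essentially unpacks (a version of) that very result by the direct transfinite construction. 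Your route is more elementary and makes transparent that the full strength of hereditary Baireness is not required for this lemma; the paper's route records the slightly stronger ``point of continuity on $G_\delta$-sets'' property along the way, which can be of independent interest.
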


\begin{proof} We consider an arbitrary $G_\delta$-set $A\subseteq X$  and put $B=\overline{A}$. Since the set  $C(f|_B)$ of all points of continuity of the restriction $f|_B$ is dense $G_\delta$-subset of $B$, then the set $B\setminus C(f|_B)$ is of the first category in $B$. Then the set $D(f|_A)$ of all discontinuities is of the first category in  $A$. Therefore, the set $C(f|_A)$ is dense in $A$, since the space $A$ is Baire. Hence, $f(X)$ is separable by~\cite[p.~398]{Kuratowski:Top:1}.
\end{proof}

\begin{theorem}\label{thm:ext1} Let $X$ be a completely regular space, $E$ be a hereditarily Baire metrizable separable space and $Y$ be a topological space which admits a super sequentially absorbing covering $(Y_n:n\in\omega)$ by completely metrizable spaces $Y_n$. Then  $(X,E,Y)$ has the ${\rm K}_1$-extension property.
\end{theorem}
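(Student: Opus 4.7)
I will decompose the given $f\in{\rm K}_1(E,Y)$ along the covering $(Y_n)$, extend each ``slice'' separately by Karlova's $K_1$-extension theorem, and then paste the pieces together. The tricky point is that Karlova's theorem requires a Polish range, whereas $Y_n$ is only assumed completely metrizable, so first I will force the image of each slice into a Polish subspace of $Y_n$.

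Since $E$ is metrizable and hence perfectly normal, ${\rm K}_1(E,Y)={\rm H}_1(E,Y)$, so Proposition~\ref{prop:aux_sigma_metr}(1) applies and yields a covering $(C_n:n\in\omega)$ of $E$ by $F_\sigma$-sets with $f(C_n)\subseteq Y_n$; the Reduction Theorem~\cite[p.~350]{Kuratowski:Top:1} then refines this to a partition $(E_n:n\in\omega)$ of $E$ by functionally ambiguous sets with $E_n\subseteq C_n$. Writing $C_n=\bigcup_k F_{n,k}$ with $F_{n,k}$ closed in $E$, each $F_{n,k}$ is hereditarily Baire metrizable separable and the restriction $f|_{F_{n,k}}\colon F_{n,k}\to Y_n$ is $F_\sigma$-measurable; hence by~\cite[Theorem 4.12]{Kum} it is barely continuous, and Lemma~\ref{lem:separable_image} forces $f(F_{n,k})$, and so $f(E_n)$, to be separable. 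The closure $Y_n':=\overline{f(E_n)}$ in $Y_n$ is therefore a Polish subspace of $Y_n$.

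Next, since $E$ is hereditarily Baire and Lindel\"of and $X$ is completely regular, $E$ is $K_1$-embedded in $X$. By Proposition~7.2 of~\cite{Karlova:CMUC:2013} the partition $(E_n)$ extends to a partition $(B_n:n\in\omega)$ of $X$ by functionally ambiguous sets with $B_n\cap E=E_n$. For each nonempty $E_n$, fix $y_n\in f(E_n)$ and define $\tilde f_n\colon E\to Y_n'$ by $\tilde f_n|_{E_n}=f|_{E_n}$ and $\tilde f_n\equiv y_n$ on $E\setminus E_n$; functional ambiguity of $E_n$ together with $f\in{\rm K}_1(E,Y)$ gives $\tilde f_n\in{\rm K}_1(E,Y_n')$, and Theorem~7.3 of~\cite{Karlova:CMUC:2013}, applicable because $Y_n'$ is Polish, produces an extension $g_n\in{\rm K}_1(X,Y_n')\subseteq{\rm K}_1(X,Y_n)$ of $\tilde f_n$. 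Setting $g|_{B_n}=g_n|_{B_n}$ defines a map $g\colon X\to Y$, and for any open $V\subseteq Y$ the preimage $g^{-1}(V)=\bigcup_n (B_n\cap g_n^{-1}(V))$ is a countable union of intersections of a functionally ambiguous set with a functionally $F_\sigma$ set, hence functionally $F_\sigma$ in $X$; thus $g\in{\rm K}_1(X,Y)$ and $g|_E=f$ by construction.

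The principal obstacle is the gap between the hypothesis (each $Y_n$ merely completely metrizable) and the input required by Karlova's extension theorem (a Polish range). Bridging it is the role of the second paragraph, where the hereditary Baire property of $E$ is invoked through Koumoullis' equivalence to convert the $F_\sigma$-measurability of the slices into bare continuity, from which separability of the images, and hence the Polish replacement $Y_n'\subseteq Y_n$, follows.
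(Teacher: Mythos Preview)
Your proof is correct and follows essentially the same route as the paper: decompose $E$ along the covering $(Y_n)$ via Proposition~\ref{prop:aux_sigma_metr}, pass to Polish subspaces of the $Y_n$'s using bare continuity (Koumoullis) and Lemma~\ref{lem:separable_image}, extend each slice by Theorem~7.3 of~\cite{Karlova:CMUC:2013}, and glue over an extended partition of $X$ obtained from Proposition~7.2 of~\cite{Karlova:CMUC:2013}. In fact you are slightly more careful than the paper at one point: the paper invokes~\cite[Theorem~4.12]{Kum} and Lemma~\ref{lem:separable_image} directly for $f\colon E\to Y$, whereas both of those results literally require a metrizable range; your detour through the closed pieces $F_{n,k}\subseteq C_n$ with range $Y_n$ makes this step fully rigorous.
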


\begin{proof} Let $f\in {\rm K}_1(E,Y)$. By Proposition~\ref{prop:aux_sigma_metr} there exists a sequence   $(A_n)_{n\in\omega}$ of  $F_\sigma$-subsets of $E$ such that $E=\bigcup_{n\in\omega} A_n$ and $f(A_n)\subseteq Y_n$ for all $n\in\omega$.

Notice that $f$ is barely continuous accordingly to \cite[Theorem 4.12]{Kum}. Then  $f(A_n)$ is separable by Lemma~\ref{lem:separable_image}. Therefore, the space  $Z_n=\overline{f(A_n)}$ is Polish. Then Theorem 7.3. from~\cite{Karlova:CMUC:2013} implies that $(X,E,Z_n)$ has the ${\rm K}_1$-extension property for every $n\in\omega$. Completely similar as in the proof of Theorem~\ref{thm:Ext:Balpha_sigmaMetr} one can construct a map $g\in {\rm K}_1(X,Y)$ which is an extension of  $f$.
\end{proof}

The following question is open. 

\begin{question}
Let $X$ be a metrizable space, \mbox{$\mathscr Y=(Y_n:n\in\omega)$} be a super sequentially absorbing covering of a topological space $Y$, $f:X\to Y$ be an $F_\sigma$-measurable map and $X$ be separable or hereditarily Baire. Is it true that there exists a covering $(C_n:n\in\omega)$ of $X$ by $F_\sigma$-sets $C_n$ such that $f(C_n)\subseteq Y_n$ for all $n\in\omega$?
\end{question}

\end{document}